\newtheorem{theorem}{Theorem}[section]
\newtheorem{algorithm}[theorem]{Algorithm}
\newtheorem{corollary}[theorem]{Corollary}
\newtheorem{lemma}[theorem]{Lemma}
\newtheorem{claim}[theorem]{Claim}
\newtheorem{proposition}[theorem]{Proposition}
\newtheorem{definition}[theorem]{Definition}
\newtheorem{question}[theorem]{Question}
\newcommand*{\qed}{\hfill\ensuremath{\blacksquare}}%
\newcommand{\ex}{{\mathrm{ex}}}
\def\endproofbox{\hskip 1.3em\hfill\rule{6pt}{6pt}}
\newenvironment{proof}%
{%
\noindent{\it Proof.}
}%
{%
 \quad\hfill\endproofbox\vspace*{2ex}
}
\def\qed{\hskip 1.3em\hfill\rule{6pt}{6pt}}
\def\e{\varepsilon}
\def\ex{{\rm ex}}
\def\cC{{\cal C}}
\def\cF{{\cal F}}
\def\ex{{\rm ex}}
\def\1e{\frac{1}{\e}\log \frac{1}{\e}}
\def \e{\epsilon}
\begin{document}

\title{\huge\bf  Linear cycles of consecutive lengths}

\author{
Tao Jiang\thanks{Department of Mathematics, Miami University, Oxford,
OH 45056, USA. E-mail: jiangt@miamioh.edu. Research supported in part
by National Science Foundation grant DMS-1855542. }
\quad \quad Jie Ma \thanks{
School of Mathematical Sciences,
University of Science and Technology of China, Hefei, 230026,
P.R. China. Email: jiema@ustc.edu.cn.
Research supported in part by NSFC grant 11622110.
}
\quad \quad Liana Yepremyan
\thanks{
Department of Mathematics, Statistics, and Computer Science, 
University of Illinois at Chicago, IL 60607, USA, and Department of
Mathematics, London School of Economics, London WC2A 2AE, UK,
l.yepremyan@lse.ac.uk, lyepre2@uic.edu, Research supported by
Marie Sklodowska Curie Global Fellowship, H2020-MSCA-IF-2018:846304.
 \newline\indent
{\it 2010 Mathematics Subject Classifications:}
05C35.\newline\indent
{\it Key Words}:  Tur\'an number, linear hypergraph, linear cycle,  even cycles.
} }

\date{June 22, 2020}

\maketitle

\begin{abstract}
A well-known result of Verstra\"ete \cite{V00} shows that for each integer $k\geq 2$ every graph $G$ with
average degree at least $8k$ contains cycles of $k$ consecutive even lengths, the shortest of which
is at most twice the radius of $G$. We establish two extensions of Verstra\"ete's result for linear cycles in linear $r$-uniform hypergraphs.

We show that for any fixed integers $r\geq 3,k\geq 2$, there exist constants $c_1=c_1(r)$ and $c_2=c_2(r,k)$, such that every linear $r$-uniform  hypergraph $G$ with average degree  $d(G)\geq c_1 k$ contains linear cycles of $k$ consecutive even lengths, the shortest of which is at most $2\lceil \frac{ \log n}{\log (d(G)/k)-c_2}\rceil$. 
In particular, as an immediate corollary, we retrieve the current best known upper bound 
on the linear Tur\'an number of $C^r_{2k}$ with improved coefficients.

Furthermore, we show that for any fixed integers $r\geq 3,k\geq 2$, there exist constants $c_3=c_3(r)$
and $c_4=c_4(r)$ such that every $n$-vertex linear $r$-uniform graph with average degree $d(G)\geq c_3k$, contains
linear cycles of $k$ consecutive lengths, the shortest of which has length at most 
$6\lceil \frac{\log n}{\log (d(G)/k)-c_4} \rceil +6$.
Both the degree condition and the shortest length among the cycles guaranteed are best possible up to a constant factor. 
\end{abstract}

\section{Introduction}
For $r\geq 3$, an $r$-uniform hypergraph (henceforth, $r$-graph) is {\it linear} if any two edges share at most one vertex.
An $r$-uniform {\it linear cycle} of length $k$, denoted by $C_k^{r}$, is a linear $r$-graph consisting of $k$ edges $e_1,e_2,...,e_k$
on $(r-1)k$ vertices such that $|e_i\cap e_j|=1$ if $j=i\pm1$ (indices taken modulo $k$) and $|e_i\cap e_j|=0$ otherwise. For $r=2$, linear $r$-graphs are just the usual graphs, and so are the linear cycles. Motivated by the known results for graphs, we study sufficient conditions for the existence of linear cycles of given lengths in linear $r$-graphs for $r\geq 3$. Our results  apply to linear $r$-graphs of a broad edge density, covering both sparse and dense graphs.

\subsection{History}

The line of research about the distribution of cycle lengths in graphs was initiated by Burr and Erd\H{o}s (see \cite{Erd76}) who conjectured that for every odd number $k$, there is a
constant $c_k$ such that for every natural number $m$, every graph of average degree at least $c_k$ contains a cycle of length $m$ modulo $k$.  This conjecture was confirmed in this full generality by Bollob\'as~\cite{bollobas} for $c_k=2((k+1)^k-1)/k$, although earlier partial results were obtained by Erd\H{o}s and Burr~\cite{Erd76} and Robertson~\cite{Erd76}. The constant $c_k$ was improved to $8k$  by Verstra\"ete \cite{V00}. Thomassen~\cite{thomassen1,thomassen2} strengthened the result of Bollob\'as by proving that for every $k$
(not necessarily odd), every graph with minimum degree at least $4k(k + 1)$ contains cycles of
all even lengths modulo $k$.  

On a similar note, Bondy and Vince \cite{BV98} proved a conjecture of Erd\H{o}s in a strong form showing that any graph with minimum degree at least three contains two cycles whose lengths differ by one or two. Since then there has been  extensive research (such as \cite{HS98,Fan,SV08, Ma,LM}) on the  general problem of finding $k$ cycles of consecutive (even or odd) lengths under minimum degree or average degree conditions in  graphs. Very recently, the optimal minimum degree condition assuring the existence of such $k$ cycles was announced in \cite{GHLM}. 

The problem of finding  consecutive length cycles in $r$-graphs is related to another classical problem in extremal graph theory, namely Tur\'an numbers for cycles in graphs and hypergraphs. For $r\geq 2$, the {\it Tur\'an number} $\ex(n,\cF)$ of a family $\cF$ of $r$-graphs is the maximum number of edges in an $n$-vertex $r$-graph which does not contain any member of $\cF$ as its subgraph.  If $\cF$ consists of a single graph $F$, we write $\ex(n,F)$ for $\ex(n,\{F\})$.
A well-known result of Erd\H{o}s (unpublished) and independently of Bondy and Simonovits \cite{BS74} 
states that for any integer $k\geq 2$, there exists some absolute constant $c>0$ such that $\ex(n,C_{2k})\leq ck n^{1+1/k}$.
The value of $c$ was further improved by the results of Verstra\"ete~\cite{V00} and Pikhurko~\cite{Pik},
and the current best known upper bound is $\ex(n,C_{2k})\leq 80\sqrt{k}\log k n^{1+1/k}$, due to Bukh and Jiang \cite{BJ}. Verstra\"ete's main result from~\cite{V00} is as follows.

\begin{theorem}{\rm (Verstra\"ete, \cite{V00})}\label{thm:Ver}
Let $k\geq 2$ be an integer and $G$ a bipartite graph of average degree at least $4k$ and girth $g$.
Then there exist cycles of $(g/2-1)k$ consecutive even lengths in $G$, the shortest of which has length at most twice the radius of $G$.
\end{theorem}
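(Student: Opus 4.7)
The plan is to follow a BFS-based strategy. First, using the average-degree hypothesis $d(G)\ge 4k$ together with the standard fact that every graph of average degree $d$ contains a subgraph of minimum degree at least $d/2$, I would extract a bipartite subgraph $H\subseteq G$ with $\delta(H)\ge 2k$ and girth still at least $g$. To keep the short-cycle bound tied to the radius of $G$ (rather than of $H$, which may be larger), I would perform BFS in $G$ itself from a center $u$ of $G$, obtaining BFS layers $L_0=\{u\},L_1,\dots,L_r$ with $r\le \mathrm{rad}(G)$ and a BFS tree $T$ on $V(G)$; the edges of $H$ are then used only as cross-edges between consecutive layers, which is where bipartiteness is invoked.

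To produce a first short cycle, I would pick a vertex $v$ in a deep layer of $H$. Bipartiteness together with $\delta(H)\ge 2k$ ensures that at least $2k$ neighbors of $v$ lie in the layer immediately above. Any two such neighbors are joined in $T$ by a tree-path through their lowest common ancestor, which together with the two edges incident to $v$ yields a cycle of length at most $2\,\mathrm{depth}(v)\le 2\,\mathrm{rad}(G)$.

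The heart of the argument is to upgrade this one cycle into $(g/2-1)k$ cycles of consecutive even lengths. I would consider the $2k$ tree-paths from the neighbors $x_1,\dots,x_{2k}$ of $v$ up towards $u$ in $T$. The girth assumption forces any two of these tree-paths to remain vertex-disjoint for at least $\lceil g/2\rceil-1$ levels below their LCA; otherwise a closed walk of length less than $g$ would yield a short cycle. I would then order the $x_i$ by their BFS discovery time and, between each consecutive pair $(x_i,x_{i+1})$, swap tree-subpaths near their LCA to obtain a run of $g/2-1$ consecutive even cycle lengths, each shift of $2$ in length corresponding to moving one level closer to or further from the LCA. Chaining the $2k$ neighbors in order gives $k$ abutting runs, hence $(g/2-1)k$ consecutive even lengths, the smallest of which still uses only tree-paths of depth at most $\mathrm{rad}(G)$.

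The main obstacle will be exactly this abutting step: I must guarantee that the largest cycle length obtained from one pair $(x_i,x_{i+1})$ coincides with the smallest obtained from $(x_{i+1},x_{i+2})$, so that the $k$ arithmetic progressions concatenate without off-by-two gaps, while simultaneously ensuring each constructed cycle is simple. The girth hypothesis is tailored precisely for this, but the combinatorial bookkeeping---identifying which swaps of tree-subpaths preserve simplicity, controlling LCA positions, and verifying the length shifts are uniformly $2$---is the delicate part of the argument.
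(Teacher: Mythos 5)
There is a genuine gap, and it is at exactly the point you flag as ``the heart of the argument.'' The paper itself does not reprove Theorem~\ref{thm:Ver} (it is quoted from \cite{V00}), but both Verstra\"ete's original proof and this paper's hypergraph analogue (the MERT construction together with Lemmas~\ref{lem:path-JMY} and~\ref{lem:tree+dense1}) make clear what ingredient you are missing: a mechanism that actually produces paths of many \emph{different} lengths. In your setup the only non-tree edges available are the $2k$ edges of the star at $v$; every other connection is a tree path, and the tree path from $x_i$ to the LCA of $x_i$ and $x_j$ is \emph{unique}. So for each pair $(x_i,x_j)$ you get exactly one cycle, of length $2+2t_{ij}$ where $t_{ij}$ is the height of their LCA above $v$'s layer, and there is nothing to ``swap'': moving one level closer to or further from the LCA along a tree path does not yield another cycle, it yields a walk that does not close up. The girth condition does guarantee $t_{ij}\ge g/2-1$, but that only bounds these lengths from below; it gives no control whatsoever on which lengths occur, and certainly not $(g/2-1)k$ consecutive even values. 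The abutting of runs, which you defer as bookkeeping, is therefore not a delicate step but an impossible one in this framework.

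The correct route replaces the single star by a dense bipartite \emph{level graph}. Since $G$ is bipartite, every edge of $G$ joins consecutive BFS levels, so $\sum_i e(L_{i-1},L_i)=e(G)\ge 2kn$ forces some level graph $B=G[L_{i-1},L_i]$ to have average degree at least $2k$; pass to a connected subgraph $B'$ with $\delta(B')\ge k$, let $S=V(B')\cap L_{i-1}$, take the closest common ancestor $x'$ of $S$ in the BFS tree, and $2$-colour $S$ according to which child of $x'$ its tree path passes through (this is Claim~\ref{Quv} in the paper's hypergraph version). The consecutive lengths then come from a path lemma inside $B'$: a connected graph of minimum degree at least $k$ and girth $g$ contains, between the two colour classes, paths of $(g/2-1)k$ consecutive even lengths with prescribed first-edge behaviour (the paper's Lemma~\ref{lem:path-JMY} is the linear-hypergraph strengthening of this). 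Each such path, closed up by the two disjoint tree paths to $x'$, gives a cycle, and the common tree part of length $2(i-1-j)$ is what shifts all these lengths uniformly while keeping the shortest at most $2\,\mathrm{rad}(G)$. In short: the variation in length must be generated inside the dense level graph, not by perturbing tree paths; without that lemma your construction produces only one cycle length per pair of neighbours of $v$.
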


In Theorem~\ref{thm:Ver}, in addition to finding $k$ cycles of consecutive even lengths we also see an upper bound on the shortest length among these cycles. Thus it immediately yields $\ex(n,C_{2k})\leq 8kn^{1+1/k}$, which
improves on the coefficients in the theorems of Erd\H{o}s and of Bondy-Simonovits. Notice that Verstra\"ete's theorem is applicable to both sparse and dense host graphs while arguments establishing bounds on $\ex(n,F)$ directly usually address  relatively dense host graphs. For example, for $F=C_{2k}$, these would typically be graphs with average degree at least $\Omega(n^{1/k})$.  

For hypergraphs, Verstra\"ete~\cite{Verstraete-survey} conjectured that for $r\geq 3$ any $r$-graph with average degree $\Omega(k^{r-1})$ contains Berge cycles of $k$ consecutive lengths where an $r$-uniform {\it Berge cycle} of length $k$ is a hypergraph containing $k$  vertices $v_1,...,v_k$ and $k$ distinct edges $e_1,...,e_k$ such that $\{v_i,v_{i+1}\}\subseteq e_i$ for each $i$, where the indices are taken modulo $k$. 
Let $\mathcal{B}^r_k$ denote the family of $r$-uniform Berge cycles of length $k$. Results of \cite{FO,G06,GL-3uniform,GL} showed that for all $k,r\geq 3$, $\ex(n,\mathcal{B}_k^r)\leq c_{k,r}\cdot n^{1+1/\lfloor k/2\rfloor}$, where $c_{k,r}=O(k^r)$.  Jiang and Ma in \cite{JM} confirmed Verstra\"ete's conjecture on the existence of Berge cycles of consecutive lengths,  and just as in Theorem~\ref{thm:Ver}, they were able to control the length of the shortest cycle in the collection which implied an improved $c_{k,r}$
by an $\Omega(k)$ factor in the upper bound of  $\ex(n,\mathcal{B}_k^r)$. As an intermediate step and a result of independent interest,
they also proved the following result.
\begin{theorem}{\rm (Jiang and Ma, \cite{JM})}\label{thm:JM}
For all $r\geq 3$, any linear $r$-graph with average degree at least $7r(k+1)$ contains Berge cycles of $k$ consecutive lengths.
\end{theorem}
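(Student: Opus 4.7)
My strategy is to reduce the Berge cycle problem in $G$ to a cycle problem in an auxiliary bipartite 2-graph, apply Verstra\"ete's Theorem~\ref{thm:Ver}, and then lift the resulting cycles back to Berge cycles while handling the parity issue (Verstra\"ete produces even cycles, whereas we want consecutive lengths covering both parities). First, by iteratively removing vertices of degree below half the current average, I would pass to a subgraph $G_0 \subseteq G$ with minimum degree $\delta(G_0) \geq d(G)/2 \geq 7r(k+1)/2$. Next, I construct an auxiliary 2-graph $H$ on $V(G_0)$: for each hyperedge $e=\{u_1,\ldots,u_r\}$ of $G_0$, I fix an arbitrary ordering of its vertices and include the $r-1$ edges $u_1u_2, u_2u_3, \ldots, u_{r-1}u_r$ of the resulting Hamilton path. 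A short count gives
\[
d(H) \;\geq\; \frac{2(r-1)}{r}\,\delta(G_0) \;\geq\; 7(r-1)(k+1) \;\geq\; 14(k+1),
\]
and passing to a bipartite subgraph $H'\subseteq H$ with $d(H') \geq d(H)/2 \geq 7(k+1) \geq 4(k+1)$, Theorem~\ref{thm:Ver} (applied with parameter $k+1$) produces $k+1$ cycles of consecutive even lengths in $H'$.

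Each cycle $v_1v_2\cdots v_\ell v_1$ of $H'$ lifts to a \emph{candidate} Berge cycle in $G_0$: by linearity, each pair $\{v_j, v_{j+1}\}$ lies in a unique hyperedge $e_j$ of $G_0$, and the lift is valid when $e_1,\ldots,e_\ell$ are all distinct. The only obstruction is a local one: two consecutive shadow edges $v_{j-1}v_j$ and $v_jv_{j+1}$ being adjacent edges of the path inside a common hyperedge $e$, i.e., $\{v_{j-1},v_j,v_{j+1}\}\subseteq e$. In that case I would reroute locally by choosing a vertex $w$ among the many common 2-shadow neighbors of $v_{j-1}$ and $v_{j+1}$ in $G_0$ that avoid the current cycle and its supporting hyperedges: replacing the subpath $v_{j-1}v_jv_{j+1}$ by $v_{j-1}wv_{j+1}$ preserves the length while restoring distinctness of the hyperedges. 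Analogous insert/delete moves on a single intermediate vertex shift the length by $\pm 1$, which I use to shift some of the $k+1$ consecutive even-length cycles so that the final collection covers $k$ consecutive integer lengths (both parities).

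The main obstacle is ensuring that all local repair and parity-shift moves can be performed simultaneously without collisions: each move requires a fresh vertex $w$ and two appropriate hyperedges distinct from those already in use, and distinct repairs on the same cycle should not interact. Under $\delta(G_0) \geq 7r(k+1)/2$, together with the linear structure (which bounds the number of hyperedges any vertex shares with a fixed small set), a careful counting argument should produce enough candidates at each step, but the bookkeeping is delicate. As a cleaner fallback, one can bypass the shadow and work directly in $G_0$: a Berge-BFS from a well-chosen root yields levels of geometrically growing size (using large minimum degree plus linearity), and a pigeonhole on the lengths of Berge paths between two fixed vertices produces $k$ Berge paths of consecutive lengths, which close into the required $k$ Berge cycles through the BFS tree.
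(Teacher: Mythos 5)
First, a point of reference: the paper does not prove Theorem~\ref{thm:JM} at all --- it is quoted as a known result of Jiang and Ma \cite{JM}, so there is no in-paper proof to compare against. (The closest analogue in this paper is the proof of Theorem~\ref{thm:all-lengths}, whose engine is Lemma~\ref{lem:pan-connected}: one produces linear paths of $k$ \emph{consecutive} lengths from a fixed root all ending in the same two edges $e,f$, and closes them with one fixed return path. That is a quite different mechanism from yours, and it is the natural way to get consecutive lengths of both parities without any post-hoc parity surgery.)

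Your proposal has genuine gaps. (i) The reduction to minimum degree: iteratively deleting vertices of degree below half the average gives $\delta(G_0)\geq d(G)/2$ only for $2$-graphs; for $r$-graphs the folklore bound (Lemma~\ref{lem:folklore}) is $d(G)/r$. Your constants happen to survive this correction, but the claim as written is false for $r\geq 3$. (ii) The lifting step is wrong as stated: the obstruction to $e_1,\dots,e_\ell$ being distinct is \emph{not} only local. Two non-adjacent edges of the Hamilton path you placed inside a single hyperedge $e$ (say $u_1u_2$ and $u_3u_4$) can both occur as non-consecutive edges of the cycle in $H'$, in which case the same hyperedge is assigned to two far-apart steps and no local reroute at a single vertex fixes it. (iii) The reroute itself is unjustified: when $\{v_{j-1},v_j,v_{j+1}\}\subseteq e$, you need a vertex $w$ joined to $v_{j-1}$ and to $v_{j+1}$ by two \emph{distinct fresh} hyperedges; a large minimum degree gives no lower bound whatsoever on the codegree of a specific pair of vertices in the shadow, so such $w$ need not exist. (iv) Most importantly, the parity shift --- the entire point of the theorem, since Verstra\"ete only supplies even lengths --- rests on the same unsupported ``insert a vertex via two fresh hyperedges'' move, for which no counting argument is given and which faces the same codegree obstacle. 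The fallback sketch (BFS plus ``pigeonhole gives Berge paths of consecutive lengths'') is not an argument: pigeonhole produces many paths of \emph{equal} length, and extracting \emph{consecutive} lengths is exactly the difficulty the theorem addresses. So the proposal does not constitute a proof; the missing idea is a device that produces paths or cycles whose lengths differ by exactly one by construction (as in Lemma~\ref{lem:pan-connected} or in \cite{JM}), rather than by repairing even cycles afterwards.
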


Theorem~\ref{thm:JM} suggests that the problem of finding Berge cycles of consecutive lengths in general $r$-graphs bears some resemblance to the graph case, 
but that is not the case for linear cycles. Indeed, the Tur\'an number  $\ex(n, C^r_k)$ of the linear cycle $C_k^r$ was determined precisely for large $n$  by F\"uredi and Jiang \cite{FJ} for $r\geq 5$ and independently by Kostochka, Mubayi and Verstra\"ete \cite{KMV} for $r\geq 3$.
Asymptotically their results show that $\ex(n,C^r_k)\sim  \lfloor \frac{k-1}{2} \rfloor \binom{n}{r-1}$.

However, if we study the emergence of linear cycles in linear host hypergraphs instead then the behavior of the Tur\'an numbers of linear cycles
bears much more resemblance to the graph case.
To be more precise, let us define the {\it linear Tur\'an number} $\ex_L(n,H)$ of a linear $r$-graph $H$ to be the maximum number of edges in an $n$-vertex linear $r$-graph $G$ that does not contain $H$ as a subgraph.
Quoting \cite{Verstraete-survey}, the problem of determining the linear Tur\'an number of a linear cycle ``seems to be a more faithful generalization of the even cycle problem in graphs".  Indeed, Collier-Cartaino, Graber, and Jiang \cite{CGJ}  proved that
for all integers $r,k\geq 2$ there exist positive constants $c=c(r,k)$, $d=d(r,k)$ such that $\ex_L(n, C^r_{2k})\leq c n^{1+1/k}$ and 
$\ex_L(n, C^r_{2k+1})\leq d  n^{1+1/k}.$
 For fixed $r$, the constants $c(r,k)$ and $d(r,k)$ established  are exponential in $k$. As a corollary, one of the main results we prove, Theorem \ref{thm:even-lengths} implies that $c=c(r,k)$ can be
taken quadratic in $k$, improving the results in~\cite{CGJ}. Note that these results on linear Tur\'an numbers of linear even cycles can be viewed as a generalization of the Bondy-Simonovits even cycle theorem,
while the result on odd linear cycles demonstrates a phenomenon that is very different from the graph case.
To this end, note that the study of $\ex_L(n,C^3_3)$ is equivalent to the famous $(6,3)$-problem, which is
to determine the maximum number of edges $f(n,6,3)$ in an $n$-vertex $3$-graph such that no six vertices span three or more edges. Ruzsa and Szemr\'edi \cite{RS} showed that for some constant $c>0$, $n^{2-c\sqrt{\log n}} < f(n,6,3) =o(n^2)$, where the upper bound uses the regularity lemma and the lower bound uses Behrend's construction  \cite{beh} of dense subsets of $[n]$ not containing $3$-term arithmetic progressions.

\subsection{Our results}

We establish two extensions of Theorem~\ref{thm:Ver} for linear cycles in linear $r$-uniform hypergraphs.
First, we give a generalization of Theorem \ref{thm:Ver} for even linear cycles in linear $r$-graphs along with a near optimal control on the shortest length of the even cycles obtained.
\begin{theorem} \label{thm:even-lengths}
Let $r\geq 3$ and $k\geq 2$ be integers. 
Let $c_1=128r^{2r+3}$ and $c_2=\log (64kr^{2r+2})$. If $G$ is an $n $-vertex linear $r$-graph with average degree $d(G)\geq c_1 k$
then $G$ contains linear cycles of $k$ consecutive even lengths, the shortest of which is at most
$2\lceil \frac{\log n}{\log (d(G)/k)-c_2}\rceil$.
\end{theorem}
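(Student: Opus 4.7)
The plan is to adapt Verstra\"ete's proof of Theorem~\ref{thm:Ver} to the linear $r$-uniform setting via a BFS decomposition, paying an $r$-dependent factor to preserve linearity at each step. First, I would reduce to a subgraph $H\subseteq G$ with minimum degree at least $d(G)/2$ by the standard iterative deletion of low-degree vertices; since $G$ is linear so is $H$, and $d(H)\geq d(G)/2\geq c_1 k/2$. Set $t := \lceil \log n/(\log(d(G)/k)-c_2)\rceil$, the target upper bound on half of the shortest cycle length.

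Second, fix a root $v_0\in V(H)$ and let $L_i$ be the set of vertices at $2$-shadow distance exactly $i$ from $v_0$ in $H$; record a BFS tree by choosing for each $u\in L_i$ a \emph{parent edge} incident to $u$ and some vertex of $L_{i-1}$. Linearity combined with the minimum-degree bound yields a level-growth estimate of the form $|L_{i+1}|\geq \frac{d(G)/k}{\mathrm{poly}(r)}\,|L_i|$, the factor $k$ in the denominator being deliberately reserved as a ``budget'' for later extension arguments. Iterating the growth forces every vertex of $H$ to lie within $2$-shadow distance $t$ of $v_0$.

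The crucial step is then the following lemma: at some level $\ell\leq t$ one can find a vertex $w$ together with roughly $k$ internally vertex-disjoint linear paths from $v_0$ to $w$ whose lengths realize $k$ consecutive even integers. The scheme mimics Verstra\"ete's broom construction: locate a vertex at some BFS level that has many cousins reachable via short alternative linear paths, and use the minimum-degree condition $d(G)/2\geq c_1 k/2$ to extend the system of paths $k$ times in a row. Each extension must avoid $O(\mathrm{poly}(r))$ forbidden edges per already-used vertex in order to preserve linearity, and the $c_1 k$ degree bound is calibrated so that this budget suffices. Once the paths are built, a standard gluing --- pairing the shortest of them with each of the others --- yields $k$ linear cycles of $k$ consecutive even lengths, the shortest being at most $2t$ as required.

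The main obstacle is producing these $k$ linear paths of consecutive even lengths, rather than merely a single linear cycle. In the graph case the incidence structure is two-dimensional and Verstra\"ete's construction attaches leaves freely; in a linear $r$-graph each new edge must meet the current path in precisely one vertex and contribute $r-1$ fresh vertices that do not accidentally collide with later edges, making the bookkeeping considerably more delicate. It is exactly this bookkeeping that drives the exponential-in-$r$ constant $c_1=128r^{2r+3}$. Crucially, the per-step cost depends only on $r$ and not on $k$, which keeps the overall degree bound linear in $k$; this is what ultimately gives, via the Tur\'an corollary, the quadratic-in-$k$ improvement on the linear Tur\'an bound for $C^r_{2k}$.
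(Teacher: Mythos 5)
Your skeleton --- pass to a subgraph of large minimum degree, run a breadth-first search from a root, show the levels grow geometrically unless some level graph is dense, and extract the cycles from a dense level at depth at most $t$ --- is indeed the architecture of the paper's proof (which additionally first passes to an $r$-partite subgraph via Erd\H{o}s--Kleitman so that projections and $(r-2)$-set colourings make sense, and works with a $d$-minimal subgraph rather than a minimum-degree one so that Lemma~\ref{lem:boundary} applies). However, the step you yourself identify as crucial is both unproved and, as stated, not the right statement. You ask for a single vertex $w$ admitting roughly $k$ internally vertex-disjoint linear $v_0$--$w$ paths whose lengths realize $k$ consecutive even integers; nothing in an average-degree hypothesis that is only linear in $k$ forces such a configuration to exist, and neither Verstra\"ete's proof nor the paper's produces one. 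What both actually produce is a single long path $P=a_1b_1a_2b_2\cdots$ inside the dense level graph such that each even prefix $P_i$ (from $a_1$ to $a_i$) closes into a cycle using two BFS-tree paths $Q_{a_1}$ and $Q_{a_i}$ whose \emph{combined} length is the same constant $2(t-1-j)$ for every $i$; the resulting $k$ cycles share $Q_{a_1}$ and the vertex $a_1$ but have different far endpoints $a_i$, so they are not a theta-like system of disjoint paths between two fixed vertices.

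To make every prefix close up with the same tree contribution one needs the Bondy--Simonovits device: take the closest common ancestor $x'$ of $S$ (the set of tree-vertices met by the dense level graph), label each $v\in S$ by which child-subtree of $x'$ contains it, and arrange that the first edge of $P$ joins the two label classes while all later edges stay inside one class, so that $a_1$ and each $a_i$ with $i\geq 2$ carry different labels and $Q_{a_1}\cup Q_{a_i}$ is a path of length exactly $2(t-1-j)$. This first-edge/remaining-edges dichotomy is precisely the content of Lemma~\ref{lem:path-JMY}, applied to a projection of the level graph equipped with the strongly proper colouring by the leftover $(r-2)$-sets; the strong-rainbow requirement on $P$ is what guarantees that the expanded cycles are \emph{linear} cycles. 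Your generic ``avoid $\mathrm{poly}(r)$ forbidden edges per used vertex'' budget controls neither the parity of the cycle lengths nor the constancy of the tree contribution, so it cannot by itself deliver $k$ consecutive even lengths. The minor inaccuracies (for $r$-graphs the folklore reduction gives minimum degree $d/r$, not $d/2$; the level-growth estimate holds only for levels whose associated edge sets are sparse, not unconditionally) are fixable, but the cycle-extraction mechanism is a genuine missing idea.
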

Theorem \ref{thm:even-lengths} immediately implies an improved upper bound on the linear Tur\'an number of linear even cycles which previously was $cn^{1+1/k}$ for some $c$  exponential in $k$~\cite{CGJ}, for fixed $r$.

\begin{corollary} \label{cor:linear-turan}
Let $r\geq 3, k\geq 2$ be integers. For all  $n$, \[ex_L(n, C^r_{2k})\leq 64k^2r^{2r+3} n^{1+1/k}.\]
\end{corollary}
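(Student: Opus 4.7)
The plan is a direct, mechanical derivation from Theorem \ref{thm:even-lengths} via contraposition. Let $G$ be any $n$-vertex $C^r_{2k}$-free linear $r$-graph; the goal is to upper bound $e(G)$. Suppose for contradiction that $e(G) > 64 k^2 r^{2r+3} n^{1+1/k}$. The first step is to convert this into an average degree lower bound via the identity $d(G) = r \cdot e(G)/n$, yielding $d(G) > 64 k^2 r^{2r+4} n^{1/k}$, which in particular exceeds $c_1 k = 128 k r^{2r+3}$. Hence the hypothesis of Theorem \ref{thm:even-lengths} is satisfied.

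The second step is to confirm that the $k$ consecutive even cycle lengths promised by the theorem actually include $2k$. The shortest such length is at most $2 \lceil \log n / (\log(d(G)/k) - c_2) \rceil$, and to force it to be at most $2k$ it suffices to have $d(G)/k \geq 2^{c_2} n^{1/k} = 64 k r^{2r+2} n^{1/k}$. Our lower bound on $d(G)$ satisfies this with a factor $r^2$ of slack. Thus the shortest guaranteed cycle has even length $2s$ for some $1 \leq s \leq k$, and the $k$ consecutive even lengths $2s, 2s+2, \ldots, 2s+2(k-1)$ contain $2k$ as one of their terms. So $G$ contains a copy of $C^r_{2k}$, contradicting the assumption, and the claimed bound follows.

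The whole argument is essentially bookkeeping, so there is no substantive obstacle. The looseness of the constant $r^{2r+3}$ in the statement of the corollary (compared to what the derivation really needs) is precisely what absorbs the factor $r$ from the relation $e(G) = n \cdot d(G)/r$ together with the additional $r^2$ factor required to push the shortest cycle length below $2k$; it also covers trivially the small-$n$ regime, where the stated upper bound is already larger than $\binom{n}{r}$. No new combinatorial idea is needed beyond invoking Theorem \ref{thm:even-lengths}.
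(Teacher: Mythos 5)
Your proposal is correct and follows essentially the same route as the paper: convert the edge bound to an average-degree bound via $d(G)=r\,e(G)/n$, apply Theorem \ref{thm:even-lengths}, verify that $\lceil \log n/(\log(d(G)/k)-c_2)\rceil\leq k$ so that the shortest guaranteed even length is at most $2k$, and conclude that $2k$ lies among the $k$ consecutive even lengths. The only difference is cosmetic (contradiction versus direct contrapositive), and your bookkeeping of the slack factors of $r$ matches the paper's.
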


Our next main result shows that under analogous degree conditions as in Theorem \ref{thm:even-lengths}, we can in fact ensure linear cycles of $k$ consecutive lengths (even and odd both included), not just linear cycles of $k$ consecutive even lengths. Furthermore,
the length of the shortest cycle in the collection is within a constant factor of being optimal. Note that such a phenomenon
can only exist in $r$-graphs with $r\geq 3$, as for graphs, one needs more than $n^2/4$ edges in an $n$-vertex graph just to ensure the existence of any odd cycle.

\begin{theorem} \label{thm:all-lengths}
Let $r\geq 3$ and $k\geq 1$ be integers. There exist  constants $c_1, c_2$ depending on $r$ such that
if $G$ is an $n$-vertex  linear $r$-graph with average degree $d(G)\geq c_1 k$ then $G$ contains linear cycles of $k$ consecutive lengths,
the shortest of which is at most  $ 6\lceil\frac{\log n}{\log (d(G)/k)-c_2}\rceil+6$.
\end{theorem}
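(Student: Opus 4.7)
The plan is to bootstrap Theorem~\ref{thm:even-lengths} by combining the guaranteed consecutive even cycles with a short odd linear cycle, exploiting that for $r\geq 3$ there is no bipartite parity obstruction to short odd linear cycles.

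After the standard iterative-deletion reduction to a subgraph $G'\subseteq G$ with minimum degree at least $d(G)/2$, I would apply Theorem~\ref{thm:even-lengths} with parameter $k'=Ck$ for a suitable constant $C=C(r)$, obtaining a family $\cF$ of $Ck$ linear cycles in $G'$ of consecutive even lengths $2\ell_0,\,2\ell_0+2,\ldots$ with $\ell_0\leq \lceil\log n/(\log(d(G)/k)-c_4')\rceil$ for an appropriate $c_4'$. In parallel, I would exhibit a short odd linear cycle $C^*$ of length $\ell^*=O(\ell_0)$ in $G'$ by running a BFS from a carefully chosen vertex to depth $\ell_0+O(1)$: the tree has many leaves and thus many back-edges among them, and since no bipartite constraint applies some back-edge must close a linear cycle of odd length.

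Next, I would splice $C^*$ with members of $\cF$. If $C^*$ and some $C\in\cF$ share a common linear sub-path of $t\geq 1$ edges (and no other common vertices), then removing this sub-path from both and taking their union yields a single linear cycle of length $|C|+\ell^*-2t$, which has opposite parity from $|C|$ since $\ell^*$ is odd. By varying $C$ across $\cF$, I would produce cycles of many consecutive odd lengths, and together with the unsplit even cycles from $\cF$ this yields $k$ linear cycles of consecutive lengths of both parities. The shortest such cycle has length $O(\ell_0)$, giving the claimed bound with the factor $6$ absorbing the combined contributions from the even cycle and the splice.

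The main obstacle will be the splicing step: guaranteeing a pair $(C,C^*)$ that shares a common linear sub-path but is otherwise vertex-disjoint, so that their union is a linear cycle. I would address this by generating many candidate odd cycles $C^*$ (varying the BFS root and the closing back-edge), then applying pigeonhole on their intersection patterns with $\cF$, exploiting that $|\cF|=Ck$ is large relative to $\ell^*=O(\ell_0)$. A secondary issue is that the shifted odd lengths must land in the right positions to interleave with even lengths into $k$ consecutive integers; this is handled by choosing $C$ within the arithmetic progression of lengths in $\cF$ so that $|C|+\ell^*-2t$ falls exactly where needed.
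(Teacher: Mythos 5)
Your plan has two genuine gaps, and the paper's proof is structured precisely to avoid both. The first is the short odd linear cycle $C^*$. Producing it is not a side step but essentially the hard content of the theorem, and your justification --- that ``since no bipartite constraint applies some back-edge must close a linear cycle of odd length'' --- is not an argument. In a BFS tree a back-edge between consecutive levels closes an \emph{even} cycle, a back-edge within a level need not exist, and even when a suitable cross-edge is present, in the hypergraph setting the union of the two tree paths and that edge need not be a \emph{linear} cycle (the $(r-2)$-sets completing distinct edges may collide with tree vertices, and the two tree paths may overlap beyond their branch vertex). The paper's remarks on the $(6,3)$-problem show that rather dense linear $3$-graphs with no linear triangle exist, so any proof of the existence of a short odd linear cycle must use the degree hypothesis quantitatively; nothing in your sketch does. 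The second gap is the splicing. Treating Theorem~\ref{thm:even-lengths} as a black box gives you no control over how its cycles meet $C^*$: to merge two cycles into one linear cycle they must intersect in exactly one common linear sub-path and nothing else, whereas two linear cycles can meet in isolated vertices, in several disjoint sub-paths, or share vertices without sharing edges --- and, most plainly, every candidate $C^*$ you generate could be vertex-disjoint from every member of $\cF$, in which case no splice exists. Pigeonhole over ``intersection patterns'' does not force the one clean configuration you need.

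The paper never produces even and odd cycles separately and so never has to splice. After passing to a minimum-degree subgraph and applying Lemma~\ref{lem:dense-subgraph2} to obtain a level set $A\subseteq L_m(x_0)$ and a dense subgraph $F$ whose edges each meet $A$ exactly once and whose attachment paths $P_v$ are internally disjoint from $F$, it uses Lemma~\ref{lem:pan-connected} (built on Lemma~\ref{lem:path-with-part}) to construct, from a single vertex $x\in V(F)\cap A$, linear paths $Q_{t+3},\dots,Q_{t+k+2}$ of $k$ \emph{consecutive integer} lengths all terminating in the same two edges $e,f$. All $k$ paths are then closed simultaneously by one fixed path $P_{xy}$ of length $q\le 2m$ running through the BFS structure back toward $x_0$, yielding cycles of lengths $q+t+3,\dots,q+t+k+2$ (or shifted by one if $y\in e\cap f$). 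Both parities come for free because the lengths are consecutive integers. If you want to rescue your outline, the missing ingredient is exactly such a ``$k$ paths of consecutive lengths between two fixed terminals'' lemma --- at which point the detour through Theorem~\ref{thm:even-lengths} and the splicing become unnecessary.
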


When viewed as a result on the average degree needed to ensure cycles of consecutive lengths, Theorem \ref{thm:all-lengths} is a substantial strengthening of both Theorem \ref{thm:JM} and
Theorem \ref{thm:even-lengths}. However, the control on the shortest length of a cycle in the collection is weaker than those in Theorem \ref{thm:even-lengths} and in \cite{JM} by roughly a factor of $3$. 
As a result, while Theorem \ref{thm:even-lengths} yields $\ex_L(n,C^r_{2k})=O(n^{1+1/k})$, Theorem \ref{thm:all-lengths} would only give us $\ex(n,C^r_{2k+1})=O(n^{1+3/k})$, and hence
it does not imply the bound on $\ex_L(n,C^r_{2k+1})$ given in~\cite{CGJ}.

Finally, note that the  shortest lengths of  linear cycles that we find in Theorem \ref{thm:even-lengths} and Theorem \ref{thm:all-lengths} are within a constant factor of being optimal, due to the following proposition which can be proved using a standard deletion argument. We delay its proof to the appendix.

\begin{proposition}  \label{prop:optimal-lengths}
Let $r\geq 2$ be an integer. For every real $\epsilon>0$ there exists a positive integer $n_0$ such that for all integers $n\geq n_0$ and for each $d$ satisfying $(2r)^{\frac{1}{\epsilon^2}}\leq d \leq n/4$, there exists an $n$-vertex linear $r$-graph with average degree at least $d$ and containing no linear cycles of length at most $(1-\epsilon)\log_d n$.
\end{proposition}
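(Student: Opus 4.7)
The plan is to apply the standard deletion (alteration) method to the random $r$-uniform hypergraph $G^{(r)}(n,p)$, in which every $r$-subset of $[n]$ is chosen to be an edge independently with probability $p$. Set $\ell := \fl{(1-\epsilon)\log_d n}$, the cycle-length threshold we must avoid, and choose $p = C(r)\,d/n^{r-1}$ for a constant $C(r)$ large enough that the expected number of edges $p\binom{n}{r}$ is at least $3dn/r$.

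The next step is to compute three first moments. First, the expected number of edges is $\Theta_r(dn)$. Second, the number of unordered pairs of $r$-subsets of $[n]$ sharing at least two vertices is $O_r(n^{2r-2})$, so the expected number of such ``bad pairs'' present in $G^{(r)}(n,p)$ is $O_r(p^2 n^{2r-2}) = O_r(d^2)$. Third, for each $3\le i\le \ell$, a linear cycle $C_i^r$ has $(r-1)i$ vertices and $i$ edges, so at most $n^{(r-1)i}/(2i)$ unordered copies sit in the complete $r$-graph on $[n]$, giving $(pn^{r-1})^i/(2i)=O_r(d^i/i)$ expected copies in $G^{(r)}(n,p)$. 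For $d$ sufficiently large, the sum $\sum_{i=3}^{\ell} O_r(d^i/i)$ is dominated by its last term, yielding $O_r(d^\ell)$ expected short linear cycles in total.

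I would then observe, via linearity of expectation, that there is a realization of $G^{(r)}(n,p)$ in which the number of edges minus the number of bad pairs minus the number of short linear cycles is at least $E[|E|]-E[|\text{bad pairs}|]-E[|\text{short cycles}|]$. Deleting one edge from each bad pair (which makes the hypergraph linear) and one edge from every remaining linear cycle of length at most $\ell$ leaves at least this many edges. Since $d\le n/4$, the $O_r(d^2)$ loss from bad pairs is $o(dn)$. To bound the loss from short cycles, write $(C(r)d)^{\ell}=C(r)^{\ell}\,d^{\ell}$; the choice of $\ell$ gives $d^{\ell}\le n^{1-\epsilon}$, and the hypothesis $d\ge(2r)^{1/\epsilon^2}$ is used precisely to guarantee $C(r)\le d^{\epsilon}$, so that $C(r)^{\ell}\le n^{\epsilon(1-\epsilon)}$ and the total short-cycle loss is $O(n^{1-\epsilon^2})=o(dn)$ for $n$ large. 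Hence the surviving hypergraph is linear, has at least $dn/r$ edges (so average degree $\geq d$), and contains no linear cycle of length at most $\ell$.

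The main obstacle is really bookkeeping: one must track the $r$-dependent constants accumulated in $p$ and in the cycle count, and show that after being raised to the $\ell$-th power they can still be absorbed into the $n^{\epsilon^2}$ of slack built into the definition of $\ell$. This is precisely why the hypothesis takes the form $d\ge(2r)^{1/\epsilon^2}$ rather than merely ``$d$ exceeds an $r$-dependent constant,'' and why the bound on the forbidden cycle length is stated as $(1-\epsilon)\log_d n$ rather than $\log_d n$ itself.
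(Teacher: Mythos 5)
Your overall strategy (alteration on a random hypergraph) is the right kind of argument, but you run it on the complete $r$-graph $G^{(r)}(n,p)$ rather than, as the paper does, on a random sparsification of an almost-perfect partial $(n,r,2)$-Steiner system supplied by R\"odl's theorem, and this difference is where your proof breaks. The step ``since $d\le n/4$, the $O_r(d^2)$ loss from bad pairs is $o(dn)$'' is false: $d\le n/4$ only gives $d^2\le dn/4$, which is $\Theta(dn)$, not $o(dn)$, and the constants go the wrong way. To get $p\binom{n}{r}\ge 3dn/r$ you are forced to take $C(r)\approx 3(r-1)!$, and then the expected number of pairs of edges sharing two or more vertices works out to order $r^2d^2$, while the number of edges is only of order $dn/r$; once $d\gtrsim n/r^3$ the bad pairs outnumber the edges and the deletion leaves nothing. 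Since the proposition demands the full range $d\le n/4$, this is a genuine gap, and it cannot be repaired by tuning $p$, because any $p$ large enough to deliver average degree $d$ produces this many overlapping pairs. Starting from a linear host makes linearity free, and that is exactly what the paper buys with R\"odl's theorem.

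Second, your cycle bookkeeping --- the part you yourself flag as ``the main obstacle'' --- is also wrong as stated, though fixably so. With $C(r)\approx 3(r-1)!$, the inequality $C(r)\le d^{\epsilon}$ does \emph{not} follow from $d\ge (2r)^{1/\epsilon^2}$: that hypothesis gives $d^{\epsilon}\ge (2r)^{1/\epsilon}$, and $(2r)^{1/\epsilon}\ge 3(r-1)!$ would force $\epsilon\lesssim 1/r$, which is not assumed (try $r=100$, $\epsilon=1/2$). The repair is to count copies of $C^r_i$ in the complete $r$-graph with the full automorphism factor $2i\,((r-2)!)^i$ rather than just $2i$; then the expected number of copies is at most $(3(r-1)d)^i/(2i)$, the base satisfies $3(r-1)\le (2r)^2\le d^{2\epsilon^2}$, and the total short-cycle count is at most $n^{(1-\epsilon)(1+2\epsilon^2)}=o(n)$, which is absorbable. (The paper avoids this issue as well: in a linear host there are at most $n^i$ linear cycles of length $i$, one per skeleton, with no $r$-dependent base at all.) In summary, the first gap is fatal at the dense end of the range of $d$ and requires the Steiner-system idea or something equivalent; the second is a slip you can repair within your own framework.
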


The rest of the paper is organized as follows.
In Section \ref{section:notation}, we introduce some notation.
In Section \ref{section:all-lengths}, we prove Theorem \ref{thm:all-lengths}.
In Section \ref{section:even-lengths}, we prove Theorem \ref{thm:even-lengths},
whose proof is more involved than that of Theorem \ref{thm:all-lengths} due to the tighter control on the shortest lengths of the cycles.
In Section \ref{section:conclusion}, we conclude with some remarks and problems for future study on related topics. 

\section{Notation} \label{section:notation}
Let $r\geq 2$ be an integer. Given an $r$-graph $G$, 
we use $\delta(G)$ and $d(G)$ to denote the minimum degree and the average degree of $G$, respectively.
Given a linear $r$-graph and two vertices $x,y$ in $G$, we define the {\it distance} $d_{G}(x,y)$ to be the length of a shortest linear path between $x$ and $y$. We drop the index $G$ whenever the graph is clear from the context.  For any vertex $x$, we define $L_i(x)$ to be the set of vertices at distance $i$
from $x$. If $x$ is clear in the context we will drop $x$.

Given a graph $G$ and  and a set $S$, an {\it edge-colouring} of $G$ using subsets of $S$ is a function
$\phi: E(G)\to 2^S$. We say that $\chi$ is {\it strongly proper}  if $V(G)
\cap S=\emptyset$ and whenever  $e,f$ are two distinct edges in $G$ that share an endpoint we have $\chi(e)\cap \chi(f)=\emptyset$. 
We say that $\chi$  {\it strongly rainbow} if $V(G)\cap S=\emptyset$ and whenever $e,f$ are distinct edges of $G$  we have $\chi(e)\cap \chi(f)=\emptyset$.

For $r\geq 2$, an $r$-graph $G$ is {\it $r$-partite} if there exists a partition of $V(G)$ into $r$ subsets $A_1,A_2\dots, A_r$ such that
each edge of $G$ contains exactly one vertex from each $A_i$; we call such $(A_1,\dots, A_r)$ an \emph{$r$-partition} of $G$.
For any $1\leq i\neq j\leq r$, we define the $(A_i,A_j)$-{\it projection} of $G$, denoted by $P_{A_i,A_j}(G)$
to be  the graph with edge set $\{e \cap (A_i\cup A_j) |\, e\in E(G)\}$. It is easy to see that for linear $r$-partite $r$-graphs the following mapping  $f: E(G)\to E(P_{A_i,A_j}(G))$ defined by $f(e)=e\cap (A_i\cup A_j)$ is bijective.

Logarithms in this paper are base $2$.


\section{Linear cycles of consecutive lengths} \label{section:all-lengths}
We first  prove some  auxiliary lemmas that are  used in the proof of Theorem~\ref{thm:all-lengths}.
Our first lemma is folklore.

\begin{lemma}\label{lem:folklore} Let $r\geq 2$ be an integer. Every  $r$-graph $G$ of average degree $d$ contains a subgraph of minimum degree  at least $d/r$.
\end{lemma}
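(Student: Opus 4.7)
The statement is a standard folklore result and my plan follows the usual iterated peeling argument. Let $n=|V(G)|$ and $m=|E(G)|$. Double-counting incidences gives $\sum_{v} \deg(v) = rm$, so $d(G)=rm/n$ and the target minimum degree $d(G)/r$ equals exactly $m/n$. The goal is therefore to produce a non-empty subgraph in which every vertex has degree at least $m/n$.

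The plan is to run the following greedy deletion procedure, starting with $H_0 := G$: at step $i$, if the current hypergraph $H_i$ contains a vertex $v$ of degree strictly less than $m/n$, pick any such $v$, remove it together with every edge of $H_i$ containing it, and call the result $H_{i+1}$; stop as soon as no such vertex exists. If the procedure halts while at least one vertex remains, then the final hypergraph $H_t$ has minimum degree at least $m/n = d(G)/r$ by construction, which furnishes the required subgraph.

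It remains to rule out the possibility that the procedure deletes all of $V(G)$. I would argue by contradiction: in that case each of the $n$ deletion steps removes strictly fewer than $m/n$ edges (since the removed vertex had degree strictly below $m/n$ at the moment of removal), so the total number of edges removed is strictly less than $n \cdot (m/n) = m$; but every one of the $m$ edges of $G$ must be gone by the end, a contradiction. There is essentially no real obstacle here: the only delicate point is that the deletion criterion is a \emph{strict} inequality, which is precisely what makes the total edge count strictly below $m$ and forces the procedure to terminate at a non-empty subgraph.
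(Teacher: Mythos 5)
Your proof is correct: the paper itself gives no proof of this lemma (it is simply labelled folklore), and your iterated peeling argument with the strict-inequality edge count is exactly the standard one --- indeed it is the same argument the paper spells out for the analogous graph statement in Lemma~\ref{degenerate-ordering}. The one degenerate case worth a half-sentence is $m=0$, where the procedure never deletes anything and the conclusion is trivial since $d/r=0$.
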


\begin{lemma} \label{lem:dense-subgraph1}
Let $r\geq 3$ be an integer. Let $G$ be a linear $r$-graph.  Let $d$ be an integer satisfying $1\leq d\leq  \delta(G)/2$.
Let $x\in V(G)$. 
Then there exist a positive integer $m\leq \lceil \frac{\log n}{\log (\delta(G)/d)}\rceil$
and a subgraph $H$ of $G$ satisfying
\begin{itemize}
\item[(A1)] $H$ has average degree at least $d/4$, and
\item[(A2)] each edge of $H$ contains at least  one vertex in  $L_m(x)$ and no  $\bigcup_{j<m} L_j(x)$.
\end{itemize}
\end{lemma}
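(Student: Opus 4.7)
The plan is to run a BFS from $x$ to obtain the level sets $L_i := L_i(x)$ and, for each $m \ge 1$, to examine the subhypergraph $H_m$ whose edges are exactly those of \emph{minimum level} $m$, i.e., edges $e$ with $e \cap L_m \neq \emptyset$ but $e \cap L_j = \emptyset$ for all $j < m$; write $E_m$ for this edge set. The standard Lipschitz property of BFS in linear hypergraphs---that $|d_G(x,u) - d_G(x,v)| \le 1$ whenever $u, v$ share an edge---implies that any edge meeting $L_m$ lies in $L_{m-1} \cup L_m \cup L_{m+1}$ and hence belongs to $E_{m-1} \cup E_m$; in particular $V(H_m) \subseteq L_m \cup L_{m+1}$. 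Setting $\rho := \delta(G)/d \ge 2$ and $M := \lceil \log n / \log \rho \rceil$, I aim to produce some $m \in [1, M]$ for which $H_m$ has average degree at least $d/4$; taking $H := H_m$ then yields (A1) and (A2).

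I argue by contradiction: suppose $r|E_m| < (d/4)|V(H_m)| \le (d/4)(|L_m| + |L_{m+1}|)$ for every $m \in [1, M]$. Counting degrees in $L_m$ two ways gives
\[
\rho d |L_m| \le \sum_{v \in L_m} \deg_G(v) \le (r-1)|E_{m-1}| + r|E_m|,
\]
because an $E_{m-1}$-edge has at most $r-1$ vertices in $L_m$ (at least one sits in $L_{m-1}$) while an $E_m$-edge has at most $r$. Plugging in the density assumption and simplifying yields the recurrence
\[
|L_{m+1}| > (4\rho - 2)|L_m| - |L_{m-1}|,\qquad 2 \le m \le M;
\]
for $m = 1$ one instead uses the identity $(r-1)|E_0| = |L_1|$ coming from linearity at $x$, which gives an essentially identical (in fact slightly stronger) lower bound on $|L_2|$.

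Finally I establish $|L_m| \ge \rho^m$ for every $0 \le m \le M$ by induction. The base cases are $|L_0| = 1 = \rho^0$ and $|L_1| = (r-1)\deg(x) \ge 2\delta(G) \ge \rho$ (using $r \ge 3$ and $d \ge 1$). For the step, substituting $|L_{m-1}| \ge \rho^{m-1}$ and $|L_m| \ge \rho^m$ into the recurrence gives $|L_{m+1}| > \rho^{m-1}(4\rho^2 - 2\rho - 1) \ge \rho^{m+1}$, where the last inequality is $3\rho^2 - 2\rho - 1 = (3\rho+1)(\rho-1) \ge 0$ for $\rho \ge 1$. Taking $m = M$ we get $|L_M| \ge \rho^M \ge n$, contradicting $|L_M| \le n - |L_0| < n$. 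The main technical subtlety I anticipate is the Lipschitz BFS property for linear hypergraphs (needed to localize each edge of $H_m$ in two consecutive levels); once that is in hand, the rest is a clean combination of a double-counting estimate with an exponential-growth induction.
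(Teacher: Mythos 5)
Your overall strategy is essentially the paper's: run a BFS from $x$, assume every candidate subgraph fails (A1), derive a growth recurrence on the level sizes, and contradict $|L_M|<n$. Your slicing into $E_m$ (edges of minimum level exactly $m$) is in fact slightly cleaner than the paper's, which takes all edges meeting $L_i$ and then discards up to half of them to enforce (A2) (that discarding is where the paper's factor $d/2\to d/4$ comes from), and your double count of degrees over $L_m$ is a legitimate substitute for the paper's two-sided count of $e(G_i)$. The Lipschitz/BFS property you flag is indeed the point that needs care, and it does hold; the paper asserts the corresponding containment $V(G_i)\subseteq L_{i-1}\cup L_i\cup L_{i+1}$ without comment.

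There is, however, a genuine gap in your final induction. You claim $|L_m|\ge\rho^m$ and then ``substitute $|L_{m-1}|\ge\rho^{m-1}$ and $|L_m|\ge\rho^m$'' into $|L_{m+1}|>(4\rho-2)|L_m|-|L_{m-1}|$ to conclude $|L_{m+1}|>\rho^{m-1}(4\rho^2-2\rho-1)$. Since $|L_{m-1}|$ enters with a negative sign, you need an \emph{upper} bound on it, not the lower bound your induction hypothesis supplies; nothing in your argument rules out $|L_{m-1}|$ being much larger than $\rho^{m-1}$, in which case the recurrence gives no information (its right-hand side can even be negative). The repair is exactly the paper's Claim: strengthen the induction hypothesis to the ratio form $|L_m|\ge\rho\,|L_{m-1}|$ for all $m$, so that $|L_{m-1}|\le |L_m|/\rho$ can legitimately be fed into the recurrence, giving $|L_{m+1}|>(4\rho-2-1/\rho)|L_m|\ge\rho\,|L_m|$ because $3\rho^2-2\rho-1=(3\rho+1)(\rho-1)\ge 0$ for $\rho\ge 1$. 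Your base case $|L_1|\ge 2\rho\ge\rho\,|L_0|$ already fits this form, and the ratio bound then yields $|L_M|\ge\rho^M\ge n$, the desired contradiction; with that one change your proof is correct.
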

\begin{proof}
For each $i>0$, let $G_i$ be the subgraph of $G$ induced by the edges that contain some vertex in $L_i$. Observe that $V(G_i)\subseteq L_{i-1}\cup L_i\cup L_{i+1}$. Let  $t =\lceil \frac{\log n}{\log (\delta(G)/d)} \rceil$.
First we show that for some $i\in [t]$, $G_i$ has average degree at least $d/2$.
Suppose for contradiction that for each $i\in [t]$, $G_i$ has average less than $d/2$. Then for each $i\in [t]$,
$e(G_i)\leq (d/2)|V(G_i)|/r\leq (d/2r) (|L_{i-1}|+|L_i|+|L_{i+1}|)$. On the other hand,  by minimum degree condition we have $e(G_i)\geq \delta(G) |L_i|/r$. Combing the two inequalities, we get
\begin{equation} \label{L-bound}
|L_{i-1}|+|L_i|+|L_{i+1}|\geq \frac{2\delta(G)}{d} |L_i|.
\end{equation}

\medskip

\begin{claim}
For each $i\in [t]$, we have $|L_i|> (\delta(G)/d)|L_{i-1}|$.
\end{claim}
\begin{proof}
The claim holds for $i=1$ since $|L_1|\geq \delta(G)$ and $|L_0|=1$. Let $1\leq j<t$ and
suppose the claim holds for $i=j$. We prove the claim for $i=j+1$. By \eqref{L-bound} and the
induction hypothesis that $L_{j-1}\leq (d/\delta(G))|L_j|$, we have
\[\frac{d}{\delta(G)}|L_j| + |L_j| +|L_{j+1}|\geq \frac{2\delta(G)}{d} |L_j|.\]
Hence
\[|L_{j+1}|\geq (\frac{2\delta(G)}{d}-\frac{d}{\delta(G)}-1)|L_j| > \frac{\delta(G)}{d} |L_j|,\]
where the last inequality uses $d\leq \delta(G)/2$. \end{proof}

By the claim, $|L_t|>(\frac{\log n}{\log (\delta(G)/d)})^t \geq n$, which is a contradiction.
So there exists $i\in [t]$ such that $G_i$ has average degree at least $d/2$. By our earlier discussion,
each edge of $G_i$ contains a vertex in $L_i$ and lies inside $L_{i-1}\cup L_i\cup L_{i+1}$.
If at least half of the edges of $G_i$ contain some vertex in $L_{i-1}$ then let $H$ be the subgraph
of $G_i$ consisting of these edges and let $m=i-1$. Otherwise, let $H$ be the subgraph of $G_i$ consisting
of edges that  do not contain vertices of $L_{i-1}$ and let $m=i$. In either case, $H$ and $m$ satisfy (A1) and (A2).
\end{proof}

\begin{lemma} \label{lem:dense-subgraph2}
Let $	r\geq3$. Let $G$ be a linear $r$-graph.  Let $d$ be a real satisfying $1\leq d\leq  \delta(G)/2$.
Let $x\in V(G)$. For each $v\in V(G)$, let $P_v$ be a fixed shortest $(x,v)$-path in $G$ and
let $\mathcal{P}=\{P_v: v\in V(G)\}$. Then there exist a positive integer $m\leq \lceil\frac{\log n}{\log (\delta(G)/d)}\rceil$, $A\subseteq L_m(x)$ and a subgraph $F$ of $G$
such that the following hold:
\begin{itemize} \item [(P1)]  $\delta(F)\geq d/r2^{2r+1}$,
\item [(P2)] each edge of $F$ contains exactly one vertex from $A$
and no vertices from the set $\bigcup_{j<m} L_j(x)$,
\item[(P3)] for each $v\in V(F)\cap A$, $P_v$ intersects $V(F)$ only in $v$.
\end{itemize}
\end{lemma}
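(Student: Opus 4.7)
The plan is to construct $F$ and $A$ by refining the subgraph from Lemma~\ref{lem:dense-subgraph1} in three stages, enforcing in turn the level structure, property (P2), and property (P3).

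First, apply Lemma~\ref{lem:dense-subgraph1} with the given $d$ to obtain an integer $m\leq\lceil\log n/\log(\delta(G)/d)\rceil$ and a subgraph $H\subseteq G$ with $d(H)\geq d/4$ whose edges each meet $L_m(x)$ and miss $\bigcup_{j<m}L_j(x)$. Since each edge already meets $L_m$, every vertex of every edge lies in $L_m\cup L_{m+1}$. For $e\in H$ put $k(e):=|e\cap L_m|\in\{1,\ldots,r\}$; by pigeonhole some value $k^*$ is attained on at least $|E(H)|/r$ edges, and after passing to this subhypergraph and applying Lemma~\ref{lem:folklore} one gets $H''$ with $\delta(H'')\geq d/(4r^2)$ in which every edge has exactly $k^*$ vertices at level $m$.

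Next, I enforce (P2) by a random sampling: include each vertex of $V(H'')\cap L_m$ independently in a candidate set $A^{(0)}$ with probability $p:=1/k^*$ (take $p=1$ if $k^*=1$) and set $F^*:=\{e\in E(H''):|e\cap A^{(0)}|=1\}$, writing $v_e$ for the unique element of $e\cap A^{(0)}$ when $e\in F^*$. A direct calculation gives $\Pr[e\in F^*]=k^*p(1-p)^{k^*-1}\geq 1/e$ and, for every $v\in V(H'')\cap L_m$, $\mathbb{E}[d_{F^*}(v)\mid v\in A^{(0)}]=d_{H''}(v)(1-p)^{k^*-1}\geq d_{H''}(v)/e$.

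Finally, I enforce (P3). For $v\in V(G)$ let $e_m^v$ and $u_{m-1}^v$ denote the last edge and the penultimate vertex on $P_v$, and set $S_v:=L_m\cap(e_m^v\setminus\{v,u_{m-1}^v\})$; then $|S_v|\leq r-2$. Since the side-vertices of the $i$-th edge of $P_v$ lie at distances $\{i-2,i-1,i\}$ from $x$, for $v\in L_m$ the only vertices of $P_v\cap(L_m\cup L_{m+1})$ besides $v$ lie in $S_v$. Delete from $F^*$ every edge $e$ with $S_{v_e}\cap V(F^*)\neq\emptyset$ to get $F$, and set $A:=V(F)\cap A^{(0)}$; then (P2) and (P3) hold by construction. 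To prove (P1), a deletion/alteration argument, using the linearity of $G$ to decouple the events $\{|e'\cap A^{(0)}|=1\}$ for distinct edges $e'\ni w$ and the uniform bound $|S_v|\leq r-2$, shows that the expected ``survival rate'' of $A^{(0)}$-vertices through the (P3) filter is at least a constant depending only on $r$; a final application of Lemma~\ref{lem:folklore} then yields $\delta(F)\geq d/(r\cdot 2^{2r+1})$.

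The main obstacle will be bounding the survival probability of a fixed $v\in A^{(0)}$ when $k^*\geq 2$: in that regime $V(F^*)\cap L_m$ properly contains $A^{(0)}\cap V(F^*)$, so obstructions in $S_v$ may come from non-$A$-vertices of $F^*$-edges whose degrees in $G$ are a priori unbounded. The key structural input is the uniform bound $|S_v|\leq r-2$, which is independent of vertex degrees and of $n$ and, together with linearity of $G$, suffices to extract a survival probability that depends only on $r$, hence delivering the required minimum-degree bound within the slack of the constant $r\cdot 2^{2r+1}$.
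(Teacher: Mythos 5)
Your first two stages are sound and run parallel to the paper's first sampling round: Lemma~\ref{lem:dense-subgraph1}, a pigeonhole/regularization step, and a random subset of $L_m$ chosen so that each surviving edge carries exactly one designated vertex (the paper uses probability $1/2$ and the bound $|f\cap L_m|/2^{|f\cap L_m|}\geq 2^{-r}$ where you use $p=1/k^*$; both give (P2)). You have also correctly isolated the real difficulty, namely that (P3) is a statement about $V(F)$ and not merely about $A$, so the obstructing vertices $S_v=e_v\cap L_m\setminus\{v\}$ must be kept out of the \emph{vertex set} of the final hypergraph.

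The gap is in your resolution of that difficulty. You delete an edge $e$ when $S_{v_e}\cap V(F^*)\neq\emptyset$ and assert that the survival rate is a constant depending only on $r$. That is false: for a fixed $u\in S_{v}$ lying in $V(H'')$, conditioning on whether $u\in A^{(0)}$, linearity makes the events $\{e'\in F^*\}$ for the distinct edges $e'\ni u$ mutually independent, each occurring with probability at least some absolute constant $c>0$; hence $\Pr[u\in V(F^*)]\geq 1-(1-c)^{d_{H''}(u)}\to 1$ as $d_{H''}(u)\to\infty$. So every edge $e$ whose set $S_{v_e}$ contains even one high-degree vertex of $H''$ is deleted almost surely, and nothing prevents this from happening for essentially all edges of $F^*$ (the degrees of the vertices in $S_v$ are exactly the quantities you have no control over). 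The bound $|S_v|\leq r-2$ is irrelevant here --- one bad vertex suffices --- and linearity works \emph{against} you, since it is what decorrelates $u$'s many chances to enter $V(F^*)$. The fix must exclude the obstructions through the random colouring itself, so that each exclusion costs a degree-independent factor: this is what the paper does with a second round of sampling, choosing $Y\subseteq X$ with probability $1/2$ per vertex and keeping an edge $f$ only if $e_{v_f}\cap Y=\{v_f\}$, an event of probability at least $2^{-(r-1)}$ for every edge regardless of degrees. (Even there some care is needed for the vertices of $e_v\cap L_m$ that were never placed in $X$, so your instinct about where the danger lies is right; but the mechanism for neutralizing them has to be a per-vertex coin flip, not membership in the surviving hypergraph.)
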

\begin{proof}
By Lemma \ref{lem:dense-subgraph1}, there exist
a subgraph $H$ of $G$  and a positive integer $m\leq \lceil \frac{\log n}{\log (\delta(G)/d)}\rceil$ satisfying properties  (A1)-(A2). So, in particular, $d(H)\geq d(G)/4$.
Now let $X\subseteq V_m$ be obtained by including each vertex of $V_m$ independently with probability $1/2$. We call an edge $f\in E(H)$ \emph{good} if $|f\cap X|=1$. For each such $f\in E(H)$ the probability of it being good is $|f\cap V_m |/2^r\geq 1/2^r$. So there exists a choice of $X$ such that the subgraph of $H$ formed  by the good edges, call it $H'$,   satisfies $e(H')\geq e(H)/2^r$. Fix such a choice of $X$ and the corresponding $H'$. For every edge $f\in E(H')$ let $v_f $ be the unique vertex  in $f\cap X$ and $e_{v_f}$ be the  edge in the path $P_{v_f}$ which contains $v_f$. 

Now, let $Y$ be a random subset of $X$ obtained by choosing each vertex of $X$ independently with probability $1/2$. 
For each edge $f\in E(H')$, we call $f$ {\it nice} if  $e_{v_f}\cap Y=\{v_f\}$. Given any $f\in E(H')$,
the probability of $f$ being nice is $(1/2)^{|e_{v_f}\cap X|}\geq (1/2)^{r-1}$ as $v_f\in e_{v_f}\cap X$ and $|e_{v_f}\cap X |\leq r-1$. So there exists a choice of $Y$ such that the subgraph of $H'$ formed by the nice edges, call it $H''$, satisfies 
\[d(H'')\geq \frac{d(H')}{2^{r-1}} \geq \frac{d(H)}{2^{2r-1}} \geq \frac{d(G)}{2^{2r+1}}.\]
Fix such a choice of $Y$ and $H''$, set $A:= Y$. By Lemma~\ref{lem:folklore} $H''$ has a subgraph $F$ of minimum degree at least $d(H'')/r \geq  d/r2^{2r+1}$. Now, $A$ and $F$ satsify (P1)-(P3).
 \end{proof}

\begin{lemma} \label{lem:path-with-part}
Let $r\geq 3$, $k\geq 1$ be integers. Let $F$ be a linear $r$-graph and $A\subset V(F)$  be such that each edge of $F$ contains
exactly one vertex of $A$. If $\delta(F) \geq r k $ then $F$ contains a linear path of length $k+2$
such that each vertex in $V(P)\cap A$ has degree one in $P$.
\end{lemma}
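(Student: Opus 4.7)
The plan is to pick a longest \emph{good} linear path $P = e_1, \ldots, e_m$ in $F$---where ``good'' means each $A$-vertex in $V(P)$ has degree exactly one in $P$---and show $m \geq k+2$. Writing $a_i$ for the unique $A$-vertex of $e_i$, goodness forces the $a_i$ to be pairwise distinct and forces none of them to coincide with an intersection vertex $e_i \cap e_{i+1}$ (otherwise some $a_i$ would appear in two edges of $P$). Consequently $|V(P)| = (r-1)m + 1$ and $|V(P) \setminus e_m| = (r-1)(m-1)$.

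Suppose for contradiction that $m \leq k+1$, and choose an endpoint vertex $y \in e_m \setminus (e_{m-1} \cup \{a_m\})$, which is nonempty since $r \geq 3$. By hypothesis $y$ lies in at least $rk$ edges of $F$. For any edge $f \ni y$ with $f \neq e_m$, linearity forces $f \cap e_m = \{y\}$, so $f$ extends $P$ to a longer good path if and only if $f \cap (V(P) \setminus e_m) = \emptyset$; note that the distinctness of $A$-labels in the extended path is then automatic, since $a_f = a_i$ for some $i < m$ would place $a_f$ (a non-$y$ element of $V(P)$) inside $f$. By linearity each $v \in V(P) \setminus e_m$ lies in at most one edge together with $y$, so the number of ``bad'' edges through $y$ other than $e_m$ is at most $|V(P) \setminus e_m| = (r-1)(m-1) \leq (r-1)k$. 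Consequently the number of valid extensions at $y$ is at least $rk - 1 - (r-1)k = k - 1$.

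For $k \geq 2$ this is positive, yielding a valid extension and contradicting the maximality of $P$; hence $m \geq k+2$ in that range. The main obstacle is the base case $k = 1$, where the counting above is tight and must be supplemented by a direct structural argument. Assuming a longest good $2$-path $P = e_1, e_2$ with intersection vertex $v_1$, the tightness of the extension count forces, for each endpoint vertex $y$ of $P$, that the non-$e_m$ edges through $y$ exactly match the vertices of the opposite edge minus $v_1$; in particular one obtains a bad edge $h$ through both endpoint vertices of $P$, whose $A$-label $a_h$ is automatically new (not in $\{a_1, a_2\}$). One then replaces $e_2$ by $h$ to form an alternative good $2$-path $e_1, h$, and argues that the third edge through $v_1$ guaranteed by $\delta(F) \geq r$ either extends $e_1, h$ directly to a good $3$-path, or---in the single subcase where it happens to carry the label $a_h$---can be circumvented via a short exchange argument using one of the extra edges forced at $a_h$ by $\delta(F) \geq r$, producing the desired length-$3$ path and the sought contradiction.
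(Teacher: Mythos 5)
Your main argument is correct and is essentially the paper's proof: take a longest linear path $P=e_1,\dots,e_m$ in which every $A$-vertex has degree one, pick a degree-one vertex $y\in e_m\setminus A$ of an end edge, and use linearity to bound the number of edges through $y$ that meet $V(P)$ again; for $k\geq 2$ the count $rk-1-(r-1)(m-1)>0$ whenever $m\leq k+1$ produces an extension, and you correctly observe that the $A$-vertex of the extending edge is automatically outside $V(P)$, so the extended path is again good. You have also put your finger on a real weakness that the paper glosses over: the paper's proof only concludes $|V(P)|\geq rk+r-1$, i.e.\ $|P|\geq k+1+\frac{k-1}{r-1}$, which yields $|P|\geq k+2$ only for $k\geq 2$; at $k=1$ both your count and the paper's are tight and neither closes the case.

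However, your repair of the $k=1$ case does not work. Locally, the claim that the only obstruction to prepending the third edge $g\ni v_1$ to the path $e_1,h$ is that ``$g$ carries the label $a_h$'' is false for $r\geq 4$: $g$ may meet $h$ in one of the $r-3$ non-$A$ vertices of $h\setminus\{y,u,a_h\}$, a case your sketch never addresses, and the ``short exchange argument'' is only announced, not carried out. More fundamentally, no repair is possible, because the statement is false for $k=1$. Take two copies of the Fano plane with one line $L=\{a_1,a_2,a_3\}$ deleted, glue them along the three points of $L$, and set $A=L$. Every surviving line meets $L$ in exactly one point, so each edge has exactly one $A$-vertex; the resulting $3$-graph is linear with minimum degree exactly $3=rk$; but the six lines within one copy pairwise intersect, so the two end edges of any linear path of length $3$ must lie in different copies, whence some consecutive pair of edges of the path meets only in a vertex of $A$, which then has degree two in the path. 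Thus for $k=1$ the lemma itself (and the paper's own proof of it) fails, and your instinct that this case needs separate treatment was right --- but the correct conclusion is that $\delta(F)\geq r$ is insufficient there, not that a cleverer extension argument exists. For $k\geq 2$ your write-up is complete and matches the paper.
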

\begin{proof}
Let $P$ be a longest linear path in $F$ with the property that vertices in $V(P)\cap A$ have degree one in $P$.
Let $e$ be an end edge of $P$. Since $e$ has $r-1\geq 2$ vertices of degree one in $P$ and
$|e\cap A|=1$,  there exists a vertex $v\in e\setminus A$ that has degree one in $P$.
There at least $\delta(G)\geq rk$ edges of $G$ containing $v$. Since $G$ is linear, there are at most
$|V(P)|-r+1$ edges in $G$ that contain $v$ and another vertex on $P$.  Suppose  $|V(P)|-r+1<rk$. Then there is
an edge $f$ in $G$ that contains $v$ and no other vertex on $P$.  But now $P\cup f$ is a longer path than $P$
and each vertex in $V(P\cup f)\cap A$ has degree one in $P\cup f$, contradicting our choice of $P$. Hence
$|V(P)|\geq rk+r-1$, which implies $|P|\geq k+2$.
\end{proof}

\begin{lemma} \label{lem:pan-connected}
Let $r\geq 3, k\geq 1$, $d=kr^2 2^{2r+2}$. Let $F$ be a linear $r$-graph with $\delta(F)\geq 2d$ and $x$ be any vertex in $F$. Then there exist  edges $e$ and $f$ and some integer $t\leq \lceil \frac{\log n}{\log (\delta(F)/d)} \rceil $ such
that for each $i \in \{t+3,t+4\dots, t+k+2\}$ there is a  path of length $i$ starting at $x$ and having $e$ and $f$ as its last two edges.
\end{lemma}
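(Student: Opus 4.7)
The plan is to bootstrap Lemma~\ref{lem:dense-subgraph2} and Lemma~\ref{lem:path-with-part}. First I would apply Lemma~\ref{lem:dense-subgraph2} to $F$ at basepoint $x$ with parameter $d=kr^22^{2r+2}$; the hypothesis $d\le\delta(F)/2$ holds since $\delta(F)\ge 2d$. This produces an integer $m\le \lceil \log n/\log(\delta(F)/d)\rceil$, a set $A\subseteq L_m(x)$, the family $\{P_v\}$ of fixed shortest paths from $x$, and a subgraph $H\subseteq F$ with $\delta(H)\ge d/(r\cdot 2^{2r+1})=2rk$ satisfying (P1)--(P3): every edge of $H$ contains exactly one vertex of $A$, and $P_v$ meets $V(H)$ only at $v$ for each $v\in V(H)\cap A$.

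Since $\delta(H)\ge 2rk\ge rk$, I would next apply Lemma~\ref{lem:path-with-part} to $H$ and $A$ to obtain a linear path $P=f_1f_2\cdots f_{k+2}$ of length $k+2$ in $H$ such that every vertex of $V(P)\cap A$ has degree one in $P$. Writing $v_j$ for the unique vertex in $f_j\cap A$, these are $k+2$ distinct vertices and each $v_j$ belongs to no edge of $P$ other than $f_j$.

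The final step is to realize the $k$ consecutive lengths by sliding the attachment point along $P$. I would set $t:=m$, $e:=f_{k+1}$, and $f:=f_{k+2}$, and for each $j\in\{1,\ldots,k\}$ concatenate $P_{v_j}$ (of length $m$) with the tail $f_j f_{j+1}\cdots f_{k+2}$ (of length $k+3-j$) to form a walk from $x$ of length $m+k+3-j$ whose last two edges are $e$ and $f$. As $j$ runs over $\{1,\ldots,k\}$, the lengths $m+3,m+4,\ldots,m+k+2$ exactly sweep out the required range $\{t+3,\ldots,t+k+2\}$.

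The only nontrivial verification is that each such concatenation is in fact a linear path: the two pieces may meet only at $v_j$, and the edge of $P_{v_j}$ containing $v_j$ may intersect $f_j$ only in $\{v_j\}$. Both are immediate from (P3), which forces $P_{v_j}$ to meet $V(H)\supseteq V(P)$ only at $v_j$, combined with the fact that $v_j$ has degree one in $P$. I therefore expect no real obstacle beyond this bookkeeping; the constant $d=kr^22^{2r+2}$ is chosen precisely so that the inequality $d/(r2^{2r+1})\ge 2rk$ holds, letting us feed the output of Lemma~\ref{lem:dense-subgraph2} into Lemma~\ref{lem:path-with-part}.
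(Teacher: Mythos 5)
Your proposal is correct and follows essentially the same route as the paper: apply Lemma~\ref{lem:dense-subgraph2} to $F$ to get the level $m$, the set $A$, and a subgraph of minimum degree at least $rk$, feed that into Lemma~\ref{lem:path-with-part} to get the path of length $k+2$ whose $A$-vertices have degree one, and then slide the attachment point $v_j$ along the path, using (P3) to glue $P_{v_j}$ to the tail. The bookkeeping of lengths and the linearity check via (P3) match the paper's argument exactly.
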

\begin{proof}
For each vertex $v$ in $F$, let $P_v$ be a shortest $(x,v)$-linear path in $F$. By Lemma \ref{lem:dense-subgraph2} (with $F$ playing
the role of $G$) there exist a positive integer $t\leq \lceil \frac{\log n} {\log(\delta(F)/d)} \rceil$, a subset $A\subseteq L_t(x)$ and a subgraph
$F'$ of $F$ that satisfy (P1)-(P3). In particular, $\delta(F')\geq d/r2^{2r+2}=rk$.
Applying Lemma \ref{lem:path-with-part}  to $F'$, we obtain a linear path $P$ of length $k+2$ in $F'$ such that each vertex in $V(P)\cap A$
has degree one in $P$. Suppose the edges of $P$ are ordered as $e_1,\dots, e_k, e, f$. For each $i\in [k]$, let $v_i$ be the unique vertex
in $e_i\cap A$. For each $i\in [k]$ since $P_{v_i}$ intersects $V(F)$ only in $v_i$, $P_{v_i}\cup \{e_i,\dots, e_k,e,f\}$ is
a linear path of length $(k+2)-(i-1)+t$ that starts at $x$ and ends with $e,f$. Since this holds for each $i=1,\ldots, k$, the claim follows.
\end{proof}

Now we are ready to prove Theorem~\ref{thm:all-lengths}.

\medskip

{\bf Proof of Theorem~\ref{thm:all-lengths}:} We will show the statement holds for $c_1=2^{4r+8} r^3$, $c_2=\log{c_3}$, where $c_3=2^{4r+4}r^5$. By Lemma~\ref{lem:folklore} $G$ contains a subgraph of minimum degree $d(G)/r$. With some abuse of notation, let us denote that subgraph by $G$ as well and let $\delta=d(G)/r$. Set $d'=kr^2 2^{2r+2}$, $d= r^{3/2} 2^{2r+2}\sqrt{\delta k}$.

Let $x_0$ be any vertex in $G$. By Lemma~\ref{lem:dense-subgraph2}  there exist  $m\leq \lceil \frac{\log n}{\log (\delta/d)} \rceil$,
a subset $A\subseteq L_m(x_0)$ and a subgraph $F$ of $G$ such that
\begin{enumerate}
\item [(P1)] $\delta(F)\geq  \frac{d}{r2^{2r+2}}$,
\item [(P2)] each edge of $F$ contains exactly one vertex in $A$ but no vertex in $\bigcup_{j<i} L_j(x_0)$, and
\item [(P3)] for each $v\in V(F)\cap A$, $P_v$ intersects $V(F)$ only in $v$.
\end{enumerate}

Now  let $x$ be any vertex in $V(F)\cap A$. 
Since $\delta(F)\geq d/r2^{2r+2}\geq 2d'$,
by Lemma \ref{lem:pan-connected},  there exist two edges $e$ and $f$ in $F$ and some integer
$t \leq \lceil \frac{\log n}{\log (\delta(F)/d')}\rceil$ such that for each $i\in \{t+3, t+4, \dots, t+k+2\}$
there is a linear a path $Q_i$ in $F$ of length $i$ which starts at $x$ and has $e$ and $f$ as the last two edges.

Let $y$ be the unique vertex in $A\cap f$.  By (P3),  $P_{x}$ and $P_{y}$ intersect $ V(F)$ only in $x$ and $y$, respectively. Therefore, $P_{x}\cup P_y$ must contain a linear $(x,y)$-path of length $q\leq 2m$ that intersects $V(F)$ only in $x$ and $y$. Let us denote this subpath by $P_{xy}$.

If $y\notin e\cap f$, then $P_{xy}\cup Q_1,\ldots, P_{xy}\cup Q_k$ are linear cycles of lengths
$q+t+3, \ldots, q+t+k+2$, respectively. If $y\in e\cap f$ then $P_{xy}\cup (Q_1\setminus f), \ldots, P_{xy}\cup \{Q_k\setminus f)$
are linear cycles of lengths $q+t+2,\ldots, q+t+k+1$, respectively.
In either case we find linear cycles of $k$ consecutive lengths, the shortest of which has length at most

\begin{eqnarray*}
2m+t+3 &\leq& 2 \left (\frac{\log n}{\log (\delta/d)} +1\right)  + \left ( \frac{\log n}{\log (\delta(F)/d')}+1\right )  +3  \\
&\leq & 3 \frac{\log n}{\log (\delta/d)} +6, \\
\end{eqnarray*}

where the last inequality holds since $\delta(F)/d'\geq \delta/d$. To conclude the proof, just note that $(d(G)/kc_3)^{1/2}\leq \delta/d$, by our choice of $d$ and $c_3$. Therefore,
 the shortest length of a cycle in the collection is at most $6\lceil\frac{\log n}{\log (d(G)/c_3k)}\rceil +6$.
\qed


\section{Sharper results for linear cycles of even consecutive lengths}
\label{section:even-lengths}

For linear cycles of even consecutive lengths, we obtain much tighter control on the shortest length of
a cycle in the collection, which as a byproduct also gives us an improvement on the current best known upper bound on
the linear Tur\'an number $\ex_L(n,C^{r}_{2k})$ of an $r$-uniform linear cycle of a given  even length $2k$.
The previous best known upper bound is $c_{r,k} n^{1+1/k}$, where $c_{r,k}$  is
exponential in $k$ for fixed $r$. For fixed $r$, we are now able to improve the bound on $c_{r,k}$ to a linear function of $k$.

\subsection{A  useful lemma on long paths with special features}

One of the key ingredients of our proof of the main result in this section is Lemma \ref{lem:path-JMY}.
The lemma is about the existence
of a long path with special features in an edge-colored graph with high average degree.
It may be viewed a strengthening of two lemmas used in \cite{JM} (Lemma 2.6 and Lemma 2.7).
We start with a preliminary lemma.

\begin{lemma} \label{degenerate-ordering}
Let $G$ be connected graph with average degree at least $2d$.  Then there exists a linear ordering $\sigma$ of
$V(G)$ as $x_1< x_2< \dots< x_n$ and some $0\leq m<n$ such that for each $1\leq i\leq m$ $|N_G(x_i)\cap \{x_{i+1},\dots, x_n\}|< d$
and that the subgraph $F$ of $G$ induced by $\{x_{m+1},\dots, x_n\}$ has minimum degree at least $d$.
\end{lemma}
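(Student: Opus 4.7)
The plan is to prove this by a standard peeling (degeneracy) argument, producing the ordering in reverse: $x_1,\dots,x_m$ will be the vertices removed during the peeling, in the order they are deleted, and $x_{m+1},\dots,x_n$ will be what remains.

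Concretely, I would run the following process on $G$. Set $G_0:=G$, and while $G_i$ is nonempty and contains some vertex $v$ of degree less than $d$ in $G_i$, pick such a $v$, call it $x_{i+1}$, and let $G_{i+1}:=G_i-x_{i+1}$. Let $m$ be the number of iterations performed and $F:=G_m$. By construction, if $F$ is nonempty then every vertex of $F$ has degree at least $d$ in $F$, and for each $1\leq i\leq m$ the vertex $x_i$ had degree less than $d$ in $G_{i-1}=G[\{x_i,x_{i+1},\dots,x_n\}]$, i.e.\ $|N_G(x_i)\cap\{x_{i+1},\dots,x_n\}|<d$. Ordering the vertices of $F$ arbitrarily as $x_{m+1},\dots,x_n$ then yields the required ordering $\sigma$.

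It remains to check that the process cannot strip $G$ completely, i.e.\ that $m<n$. The assumption $d(G)\geq 2d$ means $e(G)\geq dn$. On the other hand, each deletion of a vertex $x_i$ removes fewer than $d$ edges, so if the process deleted every vertex we would have $e(G)<dn$, a contradiction. Hence the process halts with $F$ nonempty, giving $0\leq m<n$ and $\delta(F)\geq d$ as required. (Note that connectedness of $G$ is not actually needed for the conclusion; the average degree hypothesis alone suffices.) There is no real obstacle here beyond checking that the stopping condition is met, which is handled by the simple edge-count above.
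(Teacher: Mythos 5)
Your proof is correct and is essentially identical to the paper's: both run the same peeling process, order the deleted vertices first, and use the edge count $e(G)\geq dn$ versus ``each deletion removes fewer than $d$ edges'' to conclude that $F$ is nonempty. Your observation that connectedness is not needed is also accurate.
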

\begin{proof}
As long as $G$ contains a vertex whose degree in the remaining subgraph is less than $d$ we delete it from $G$. We continue until no such vertex exists. Let $F$ denote the remaining subgraph. Suppose this terminates after $m$ steps. Then we have deleted at most 
$dm\leq d(n-1)<e(G)$ edges. Hence $F$ is nonempty. Let $x_1< x_2<\dots< x_m$ be the vertices deleted in that order. Let $x_{m+1}<
\ldots< x_n$ be an arbitrary linear ordering of the remaining vertices. Then the ordering $\sigma:=x_1<\ldots< x_n$ and $F$ satisfy the requirements.
\end{proof}

The following lemma is written in terms of  colourings of graphs, but in our applications $H$ will be some $(A_i,A_j) $-projection of an $r$-partite $r$-graph $G$ where the colouring is obtained by colouring  the edge $e\cap (A_i\cup A_j)$ (where $e\in E(G)$) in $H$ by the $(r-2)$-set $e\setminus (A_i\cup A_j)$.

\begin{lemma} \label{lem:path-JMY}
Let $r\geq 3$. Let $H$ be a connected graph with minimum degree at least $4r\ell$. Let $\chi$ be a strongly proper edge-colouring of $H$ using $(r-2)$-sets. Let $E_1, E_2$ be any partition of $E(H)$ into two nonempty sets such that $|E_1|\leq |E_2|$.
Then there exists a strongly rainbow path of length at least $\ell$ in $H$ such that the first edge of $P$ is in $E_1$ and
all the other edges are in $E_2$.
\end{lemma}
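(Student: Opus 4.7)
The plan is to take a longest strongly rainbow path of the prescribed form and derive a contradiction from its assumed shortness, using strong properness together with rotations. Let $P=e_1e_2\cdots e_t$ be such a path of maximum length in $H$: $e_1\in E_1$ and $e_i\in E_2$ for every $i\geq 2$. Such a $P$ exists with $t\geq 1$ since $E_1\neq\emptyset$. The goal is to show $t\geq \ell$. Assume for contradiction that $t\leq \ell-1$, and write $P=v_0v_1\cdots v_t$ with $e_i=v_{i-1}v_i$.

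The core observation is that extremality of $P$ forces $v_t$ to have many $E_1$-edges to fresh vertices with fresh colors. Since $\chi$ is strongly proper at $v_t$, for each color element $c\in \chi(P)$ at most one edge incident to $v_t$ contains $c$, so at most $(r-2)t$ incident edges conflict in color with $\chi(P)$; moreover at most $t$ incident edges go to $V(P)\setminus\{v_t\}$. Hence at least $4r\ell-t-(r-2)t\geq (3r+1)\ell$ edges $v_tw$ satisfy $w\notin V(P)$ and $\chi(v_tw)\cap \chi(P)=\emptyset$. By the maximality of $P$, none of these ``good'' edges can lie in $E_2$ (else appending one would yield a strictly longer good path), so all of them lie in $E_1$; in particular $d_{E_1}(v_t)\geq (3r+1)\ell$.

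Next I rotate: for any good $E_1$-edge $e'=v_tw$ at $v_t$, the sequence $P'=(e',e_t,e_{t-1},\dots,e_2)$ is another maximum good path, now with endpoint $v_1$, and the same count applied at $v_1$ gives $d_{E_1}(v_1)\geq (3r+1)\ell$. Combining such endpoint swaps with Po\'sa-style rotations inside the $E_2$-subpath $v_1v_2\cdots v_t$ (using $E_2$-chords $v_tv_i$ with compatible colors to produce maximum good paths ending at $v_{i+1}$) yields a large set $V^*\subseteq V(P)$ of possible endpoints, each of which is $E_1$-heavy. The main obstacle I anticipate is converting this local $E_1$-abundance into a global contradiction with $|E_1|\leq|E_2|$: the plan is to apply a Po\'sa-expansion type argument to lower-bound $|V^*|$ enough that $\sum_{v\in V^*}d_{E_1}(v)\geq (3r+1)\ell |V^*|$ becomes incompatible with $2|E_1|\leq 2|E_2|$ under the assumption $\delta(H)\geq 4r\ell$; pinning down this quantitative expansion, and checking that the rotations preserve the strongly-rainbow and $E_1$-prefix conditions throughout, is the delicate step I expect.
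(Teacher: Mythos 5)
Your opening moves are sound: the extremal choice of a good path $P$, the count showing that at least $4r\ell-(\ell-1)-(r-2)(\ell-1)\geq(3r+1)\ell$ edges at the endpoint go to fresh vertices with fresh colours, and the conclusion that by maximality all of these must lie in $E_1$. The reversal $P'=(e',e_t,\dots,e_2)$ is also a legitimate maximum good path ending at $v_1$. The gap is in the final step, and I do not think it can be repaired along the lines you sketch. Your plan is to derive a contradiction from $\sum_{v\in V^*}d_{E_1}(v)\geq(3r+1)\ell\,|V^*|$ versus $|E_1|\leq|E_2|$. But every endpoint produced by your rotations lies on $V(P)$, so $|V^*|\leq t+1\leq\ell$, and hence the left-hand side is at most of order $r\ell^2$; meanwhile $2|E_1|$ can be as large as $e(H)\geq 2r\ell n$, which for large $n$ dwarfs any bound depending only on $r$ and $\ell$. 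Local $E_1$-abundance at a bounded set of vertices simply cannot contradict the global balance condition. (Indeed, in the paper the hypothesis $|E_1|\leq|E_2|$ plays only the mild role of guaranteeing $d(H_2)\geq 2r\ell$; it is not the engine of any contradiction.) The underlying obstruction is real: a longest good path can genuinely get stuck at a vertex all of whose fresh, colour-compatible edges lie in $E_1$, and since only the first edge of a good path may come from $E_1$, no amount of rotation confined to $V(P)$ escapes this.

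The paper circumvents exactly this trap by building the path in the opposite order. It first passes to a component $L$ of the $E_2$-subgraph with $d(L)\geq 2r\ell$, takes a degeneracy-type ordering $\sigma$ of $L$ with a residual subgraph $F$ of minimum degree $r\ell$ (Lemma~\ref{degenerate-ordering}), and considers strongly rainbow \emph{increasing} paths in $L$ that either have length $\ell-1$ or end in $F$, choosing one whose first vertex $x_{j_1}$ is $\sigma$-minimal. Minimality bounds the backward $L$-degree of $x_{j_1}$ by $\ell(r-2)$ and the ordering bounds its forward $L$-degree by $r\ell$, so $x_{j_1}$ has at least $2\ell(r+1)$ incident $E_1$-edges, one of which can be prepended with a fresh colour; if the resulting good path is still short, its last vertex lies in $F$ and a greedy colour-avoiding extension inside $F$ finishes the job. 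In short: rather than fixing the $E_1$-edge first and hoping the $E_2$-extension does not stall, one engineers an $E_2$-path whose \emph{starting} vertex is forced to be $E_1$-heavy. If you want to salvage your write-up, this reversal of the construction order is the missing idea.
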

\begin{proof}
For $i=1,2$, let $H_i$ be the subgraph of $H$ induced  by the edge set $E_i$. Note that $d(H_2)\geq 2r\ell$. Let $L$ be a connected component of $H_2$ with $d(L)\geq 2r\ell$. By Lemma~\ref{degenerate-ordering}, there exist
$0\leq m<n$ and $\sigma:=x_1<x_2<\dots< x_n$  be as in Lemma~\ref{degenerate-ordering} such that
for each $1\leq i\leq m$, $N_L(x_i)\cap \{x_{i+1},\dots x_n\}|<r\ell$ and that the subgraph $F$ of $H$
induced by $\{x_{m+1},\dots, x_n\}$ has minimum degree at least $r\ell$.

Let us call a strongly rainbow path $P$ in $H$ a {\it good path} if it has length at least one, its first edge is in $E_1$
and its other edges (if exist) are in $E_2$. To prove the lemma, we need to show that $H$ has a good path of length $\ell$.

\begin{claim} \label{claim:good-path}
If $H$ has a good path that ends with a vertex in $F$ then $H$ has a good path of length $\ell$.
\end{claim}
\begin{proof}
Among all good paths in $H$ that end with a vertex in $F$, let $P$ be a longest one.
If $|P|\geq \ell$ then we are done. Hence we may assume that $P=uv_1v_2\dots v_j$. for some $j\leq \ell-1$. Since $\delta(L)\geq r\ell$,
there are at least $r\ell$ edges of $L$ incident to $v_j$. Among these edges, more than $\ell  r - j > \ell (r-1) $ of them join $v_j$ to a vertex outside $V(P)$. Since the colouring $\chi$ is strongly proper, the colours of these edges form a  matching  of $(r-2)$-sets of size more than $\ell(r-1)$.
Let $C(P)=\bigcup_{e\in E(P)}\{c|c\in \chi(e)\}$. Then $|C(P)|\leq j(r-2)< \ell(r-2)$.
Hence, there must exist  a vertex $v_{j+1}\in V(L)$
outside $V(P)$ such that $\chi(v_jv_{j+1})\cap C(P)=\emptyset$.
Now, $P\cup v_jv_{j+1}$ is a longer good path than $P$, a contradiction.
\end{proof}

Let us call a path $x_{j_1}x_{j_2}\dots x_{j_t}$ in $L$ an {\it increasing path} under $\sigma$ if  $x_{j_1}<x_{j_2}<\dots< x_{j_t}$ in $\sigma$;
we call $x_{j_t}$ the {\it last vertex} of the path. Let $\mathcal{P}$ be the collection of  strongly rainbow increasing paths in $L$ with the property that either it has length $\ell-1$ or  it has length less than $\ell-1$ and its last vertex is in $F$. 
 As a single vertex in $F$ is an increasing path, $\mathcal{P}\neq\emptyset$.
Among all the paths in $\mathcal{P}$  let $P= x_{j_1}x_{j_2}\dots x_{j_t}$ be such that $j_1$ is minimum.  
By our assumption, either $t=\ell$ or $t<\ell$ and $x_{j_t}\in V(F)$.
If  $|N_L(x_{j_1})\cap \{x_1, \dots, x_{j_1-1}\}|>\ell (r-2)$ then by a similar argument as in the proof of Claim \ref{claim:good-path} we can find $j_0<j_1$ such that $\chi(x_{j_0}x_{j_1})$ is disjoint from all $\chi(x_{j_i}x_{j_{i+1}})$ for all $i\in [t-1]$  and $x_{j_0}x_{j_1}\in L$.
In this case,  either $x_{j_0} x_{j_1}\dots x_{j_{t-1}}$ or $x_{j_0}x_{j_1}\ldots x_{j_t}$ would contradict our choice of $P$. Hence,  $|N_L(x_{j_1})\cap \{x_1, \dots, x_{j_1-1}\}|\leq \ell (r-2)$. By the definition of $\sigma$,
$|N_L(x_{j_1})\cap\{x_{j_1+1},\ldots, x_n\}|<r\ell$. Hence, $d_L(x_{j_1})<\ell(r-2)+r\ell<3r\ell$. Since $\delta_H(x_{j_1})\geq 4r\ell$, $x_{j_1}$ is incident to at least  $4\ell r- \ell r- \ell (r-2)=2\ell (r+1)  $ many  edges in $E_1$. Among them more than $2\ell r +\ell$ of them joins $x_{j_1}$
to a vertex outside $V(P)$.
Since $\chi$ is strongly proper, the colours on these edges form
a matching of size more than $2\ell(r+1)$. Since $C: =\bigcup_{i=1}^t \chi(x_{j_i}x_{j_{i+1}})$ has size less than
$\ell r$,
there must exist at least one edge of $E_1$ that joins $x_{j_1}$ to a vertex $x_{j_0}$ outside $V(P)$ 
such that $\chi(x_{j_0}x_{j_1})$ is disjoint from $C$. Now, $x_{j_0}x_{j_1}\ldots x_{j_t}$ is a good path of length $t+1$.
If $t=\ell$ then we are done. If $t<\ell$, then $x_{j_t}\in V(F)$ and we are done by Claim \ref{claim:good-path}.
\end{proof}

The following cleaning lemma is similar to part of Lemma \ref{lem:dense-subgraph2}.

\begin{lemma} \label{lem:transversal-subgraph}
Let $H$ be a linear $r$-partite $r$-graph with an $r$-partition $(A_1,\cdots, A_r)$.
Let  $M$ be  an $(r-1)$-uniform matching where for each $f\in M$, $f$ contains one vertex of
each of $A_2,\dots, A_r$. Then there exists a subgraph $H'\subseteq H$ such that
\begin{itemize}
\item [(1)] $e(H')\geq [1/(r-1)]^{r-1} e(H)$,
\item [(2)] each edge of $M$ intersects $V(H')$ in at most one vertex.
\end{itemize}
\end{lemma}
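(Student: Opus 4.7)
My plan is to prove Lemma~\ref{lem:transversal-subgraph} via a simple probabilistic argument: for each edge $f\in M$ I will keep exactly one vertex of $f$ and delete the other $r-2$, chosen uniformly at random, and show that the induced subgraph of $H$ retains a $(1/(r-1))^{r-1}$-fraction of the edges in expectation.

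More concretely, for each $f\in M$, choose a vertex $v_f \in f$ independently and uniformly at random. Let
\[
X \;=\; \bigl(V(H) \setminus \bigcup_{f\in M} f\bigr) \;\cup\; \{v_f : f \in M\},
\]
and let $H'$ be the sub-$r$-graph of $H$ consisting of those edges $e\in E(H)$ with $e\subseteq X$. Property (2) is immediate from the construction: for every $f\in M$, $f\cap V(H') \subseteq f \cap X = \{v_f\}$. For property (1), I will bound $\Pr[e\subseteq X]$ for a fixed $e\in E(H)$. The single vertex of $e$ in $A_1$ is never touched by $M$, so it always lies in $X$. Each of the remaining $r-1$ vertices of $e$ lies in some unique $A_i$ with $i\ge 2$, and since $M$ is a matching it is contained in at most one $f\in M$; moreover, because $H$ is linear, $|e\cap f|\le 1$ for every $f\in M$. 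Consequently $e$ meets at most $r-1$ distinct edges $f_1,\dots,f_k\in M$ (with $k\le r-1$), and the event $e\subseteq X$ is exactly the intersection of the independent events ``$v_{f_j}$ is the unique vertex of $e\cap f_j$'' for $j=1,\dots,k$.

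By independence of the random choices across different edges of $M$, this probability equals $(1/(r-1))^{k}\ge (1/(r-1))^{r-1}$. Linearity of expectation then yields
\[
\mathbb{E}[e(H')] \;\ge\; \Bigl(\tfrac{1}{r-1}\Bigr)^{r-1} e(H),
\]
so there exists a specific choice of $\{v_f\}_{f\in M}$ for which $e(H')$ meets this bound, and we can take the corresponding deterministic subgraph as our $H'$. I do not anticipate a genuine obstacle here — the main things to verify carefully are the bookkeeping of which vertices of an edge $e$ can actually be covered by $M$ (only the $r-1$ vertices outside $A_1$) and the independence of the one-per-$M$-edge random choices, both of which follow immediately from the matching and linearity assumptions.
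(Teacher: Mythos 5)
Your proof is correct and is essentially the same as the paper's: choosing a uniformly random vertex $v_f\in f$ to keep from each $f\in M$ is exactly the paper's device of assigning each $f$ a uniformly random colour $i\in\{2,\dots,r\}$ and retaining $f\cap A_i$, and your probability bound $(1/(r-1))^{k}$ with $k\le r-1$ matches the paper's $(1/(r-1))^{s}$. The only point to watch is your justification that $|e\cap f|\le 1$: linearity of $H$ only controls intersections of pairs of edges of $H$, and an $f\in M$ is not assumed to lie inside an edge of $H$ — but the paper's own proof silently relies on the same fact (it is needed for the $f_i$ to be distinct, hence for the independence used in the product of probabilities), and it does hold in the lemma's application, where $M$ consists of $(r-1)$-sets obtained from edges of the linear host hypergraph.
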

\begin{proof}
Let us independently colour each edge of $M$ using a colour in $\{2,\dots, r\}$ chosen uniformly at random.
Denote the colouring $c$.
For each $i\in \{2,\dots, r\}$, let $M_i=\{f\in M: c(f)=i\}$ and  let $B_i=\{f \cap A_i: f\in M_i\}$.
Let $H'=\{e\in E(H): e\cap V(M) \subseteq B_2\cup\dots \cup B_r\}$.

Let $f$ be any edge of $M$.  By the definition of $H'$,
$f\cap V(H')\subseteq B_2\cup\dots \cup B_r$. Suppose $f$ is coloured $i$. Then since $M$ is a matching, we have
 $|f\cap B_i|=1$ and $f\cap B_j=\emptyset$ for each $j\in \{2,\ldots, r\}\setminus \{i\}$.
 Therefore, $|f\cap V(H')|=1$.

Next, for some colouring $c$ the resulting $H'$ satisfies $e(H')\geq [(1/(r-1)]^{r-1} e(H)$.
Let $e$ be any edge of $H$. Let $s=|e\cap V(M)|$.  If $s=0$ then
 $e$ is in $H'$ with probability $1$. So we may assume that $1\leq s \leq r-1$.
 Since $G$ is $r$-partite, the $s$ vertices of $S$ all lie in different parts among $A_2,\ldots, A_r$.
 Without loss of generality, suppose $e\cap V(M)=\{a_2,\dots, a_{s+1}\}$, where for each $i=2,\dots, s+1, a_i\in A_i$.
 Since $M$ is matching, for each $i=2,\ldots, s+1$, there is a unique edge $f_i\in M$ that contains $a_i$.
 The probability that $a_i\in B_i$ is the probability that $f_i$ is coloured $i$, which is  $1/(r-1)$. 
 Hence, the probability that for each $i=2,\ldots, s+1$,
 $a_i\in B_i$ is $[1/(r-1)]^{r-1}$. In other words, the probability that $e$ is in  $H'$ is 
 $[1/(r-1)]^s\geq [1/(r-1)]^{r-1}$. So there exists a colouring $c$ for which
$e(H')\geq [1/(r-1)]^{r-1} e(H)$.  The subgraph $H'$ satisfies the requirements of the lemma.
\end{proof}

\subsection{Rooted expanded trees and linear cycles of consecutive even lengths}

In this subsection, we introduce some of the key notions we use, in particular, a variant of a breadth-first-search tree in a linear $r$-partite $r$-graph $G$, and prove some auxiliary results we need for the proof of the main theorem.
 
\begin{definition} \label{rooted-tree}
Let $r\geq 3$ be an integer. Let $G$ be a graph.  Let $\phi$ be any edge-colouring $\chi$ by $(r-2)$-sets satisfying that for every edge $e=uv$ we have $u,v\notin \chi(uv)$. We define the \emph{$(\chi,r)$-expansion} of $G$, denoted by
$G^{\chi}$, to be the $r$-graph on vertex set $V(G)\cup \chi(G)$ obtained from $G$
by expanding each edge $e$ of $G$ into the $r$-set $e\cup \chi(e)$, where $\chi(G)=\{c\in \chi(e) \textit{  for some } e\in E(G)\}$. 

In the definition of $(\chi,r)$-expansion we don't require the sets $V(G)$ and $\chi(G)$ to be disjoint.  However, if $\chi$ is a strongly rainbow, then $(\chi,r)$-expansion is isomorphic to what is known in the literature, as the \emph{$r$-expansion} of $G$, defined as follows. The {\it $r$-expansion} $G^r$ of $G$ is an $r$-graph obtained from $G$
by expanding each edge $e$ of $G$ into an $r$-set using pairwise distinct $(r-2)$-sets  disjoint from $V(G)$. 
Note that the $(r-2)$-sets  used for the expansion naturally define a strongly rainbow edge-colouring on $G$.
\end{definition}

\begin{algorithm} {\bf (Maximal Expanded  Rooted Tree - MERT)} \label{maximal-tree}
{\rm

\medskip

{\bf Input:} A linear $r$-partite $r$-graph $G$ with a fixed $r$-partition $(A_1,\dots, A_r)$ and a vertex $x$ in $A_1$.

{\bf Output: } $(H,T,\chi)$ where $H$ is some subgraph $H\subseteq G$, $T$ is a tree rooted at $x$ such that $H$ is the $r$-expansion of $T$ and furthermore, for each $i\geq 0$,
there exists some $j\in [r]$ such that $L_i(x)\subseteq A_j$, where $L_i(x)$ is the $i$th level in $T$, and finally $\chi$ is a strongly rainbow edge-colouring of $T$. 

We will also obtain a collection of subgraphs of $H$, $\{H_i\}_{i=0}^m$ where each $H_i$ is called the $i$th segment of $H$ and a collection of $(r-1)$-uniform matchings $\{M_i\}_{i=1}^m$ where $V(M_i)\subset V(H_i)\setminus V(H_{i-1})$ and $M_i$ is called the $i$th matching of $H$, these are described further below.

{\bf Initialization:} Let $H_0=\{x\}$. Let $L_0=\{x\}$, $T_0=\{x\}$. Let $H_1$ be the subgraph of $G$ consisting of all the edges
of $G$ containing $x$. For every $v\in I\in E(H_1)\setminus\{x\}$ let $p_v=\{x\}$.

{\bf Iteration:} Let $E_i$ denote the set of edges in $G$ that contain exactly one vertex in $V(H_i)\setminus V(H_{i-1})$.  If $E_i=\emptyset$ then let $L_i=(V(H_i)\setminus V(H_{i-1}))\cap A_2$,
and let $T_i$ be the super-tree of $T_{i-1}$ obtained from $T_{i-1}$ by joining every $v\in L_{i}$ to $p_v\in L_{i-1}$. Let $H= \cup_{0\leq j\leq i} H_i$, $T=T_i$ and terminate.

If $E_i\neq \emptyset$ then do the following. Suppose $L_{i-1}\subseteq A_\ell$. For each $j\in [r]\setminus \{\ell\}$, let $E_i^j$
be the set of edges in $e\in E_i$ such that
$|e\cap (V(H_i)\setminus V(H_{i-1}))\cap A_j|=1$. Then $E_i=\bigcup_{j=\in [r]\setminus \{\ell\}} E_i^j$. Let $s(i)$ be some $j\in [r]\setminus \{\ell\}$ that maximizes $|E_i^j|$.
Let $L_i= E_i^{s(i)} \cap A_{s(i)}$.
Let $M_i$ be a largest matching of $(r-1)$-tuples
in $\{e\setminus L_i: e\in  E_i^{s(i)}\}$.
For each $I\in M_i$ we do the following.
Since the graph $G$ is linear, there  is a unique $v_I\in L_i$ such that $I\cup v_I\in E_i^{s(i)}$. For each $u\in I$, we define $p_u$ to be $v_I$ and refers to it as the {\it parent} of $u$. 
Let $H_{i+1}$ be the subgraph of $G$
induced by the edges $\{I\cup v_I|I\in M_i\}$.   Increase $i$ by one and repeat. 

\textbf{Stop:} Suppose the algorithm stopped after $m$ steps then we call $m$ the \emph{height} of $H$, noting that $m$ is also the height of the tree $T$. We will interchangeably  call both $H$ and the pair $(H,T)$ an \emph{MERT} of $G$ rooted at $x$. Let $\chi$ be the following colouring on $T$: For every edge $uv\in E(T)$ there is a unique $(r-2)$-tuple $I$ such that $uv\cup I\in E(H)$, we let $\chi(uv)=I$. By construction of $H$, $\chi$ is strongly rainbow.
\qed

}
\end{algorithm}

\begin{figure}[H]
\begin{center}
  \includegraphics[height=7.5cm]{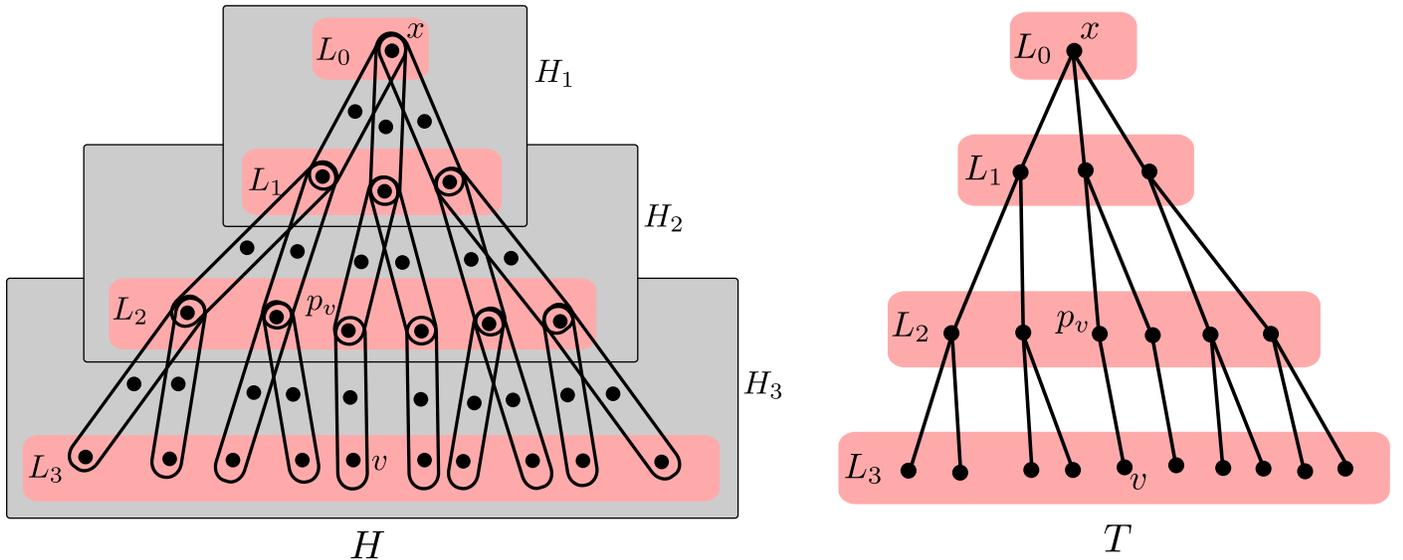}
 \label{fig:mert}
  \end{center}
  \caption{$H=H_1\cup H_2\cup H_3$ and the corresponding tree $T$}
\end{figure}

\begin{lemma} \label{lem:tree+dense1}
Let $r\geq 3,t\geq 1$. Let $G$ be an $r$-partite $r$-graph  with an $r$-partition $(A_1, A_2,  \dots, A_r)$. Let $x$ be a vertex
in $G$. Let $(H,T)$ be an MERT rooted at $x$.  Let $D$ be the subgraph of $G$ consisting of all the edges in $G$ that contain a vertex in $L_{t-1}$, at least
  one vertex in $V(H_t)\setminus L_{t-1}$ and no vertices from $(\bigcup_{j<t} (V(H_j)\setminus L_{t-1}))$.
If $e(D)\geq 8kr(r-1)(|L_{t-1}|+|L_t|)$ then $G$ contains linear cycles of lengths $2m+2, 2m+4,\dots, 2m+2k$ for some $m\leq t-1$.
\end{lemma}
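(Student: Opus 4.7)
The plan is to exploit the density of $D$ to locate a long strongly rainbow path that lifts to a linear path in $G$ spanning many vertices of $L_{t-1}$, then close subpaths into cycles using tree paths in $T$. Let $s=s(t-1)$, so $L_{t-1}\subseteq A_s$. Since each edge of $D$ contains a vertex in $V(H_t)\setminus L_{t-1}=V(M_{t-1})$, a pigeonhole over the parts distinct from $A_s$ produces some $A_{s'}$ such that at least $e(D)/(r-1)$ edges of $D$ have their $A_{s'}$-vertex in $V(M_{t-1})\cap A_{s'}$; call this subfamily $D_1\subseteq D$. Projecting $D_1$ onto $A_s\cup A_{s'}$ is injective by linearity of $G$, yielding a bipartite graph $D^*$ equipped with the strongly proper edge-colouring sending each edge to the $(r-2)$-tuple of its remaining vertices. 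Applying Lemma~\ref{lem:transversal-subgraph} (with $M=M_{t-1}$) further cleans $D^*$ so that each $(r-1)$-tuple of $M_{t-1}$ meets $V(D^*)$ in at most one vertex, while losing only a factor $(r-1)^{r-1}$ in edge count; thus $|V(D^*)|\leq |L_{t-1}|+O(|L_t|)$.

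Using the hypothesis $e(D)\geq 8kr(r-1)(|L_{t-1}|+|L_t|)$, the cleaned $D^*$ has average degree linear in $k$, and hence Lemma~\ref{lem:folklore} provides a subgraph $F$ of minimum degree at least $8rk$. Apply Lemma~\ref{lem:path-JMY} to $F$ with parameter $\ell\sim 2k$ and a partition $E_1\cup E_2$ of $E(F)$, where $E_1$ consists of the projections of tree-edges from $H_t$. This produces a strongly rainbow path of length at least $2k$ in $F$ whose first edge is in $E_1$. Lifting this path via its colouring yields a linear path $\tilde P$ in $G$ of the same length, alternating between $A_s$ and $A_{s'}$, so its even-indexed vertices $v_0,v_2,\ldots,v_{2k}$ all lie in $L_{t-1}$ while its odd-indexed vertices lie in $V(M_{t-1})\cap A_{s'}$.

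For each $i=1,\ldots,k$, close the subpath of $\tilde P$ from $v_0$ to $v_{2i}$ into a linear cycle using the tree paths in $T$ from $v_0, v_{2i}$ to their lowest common ancestor $w_i$. Disjointness of these tree paths from the interior of $\tilde P$ is automatic: interior vertices of $\tilde P$ lie outside $\bigcup_{j<t}(V(H_j)\setminus L_{t-1})$ by the defining property of $D$, while interior vertices of the tree paths lie inside that set. The resulting cycle has length $2i+2(t-1-d_i)$ where $d_i$ is the depth of $w_i$. The principal obstacle is forcing the $k$ cycle lengths to form the consecutive even sequence $2m+2,2m+4,\ldots,2m+2k$, equivalently $d_i\equiv t-1-m$ across $i$. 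The anchoring provided by choosing $E_1$ to consist of tree edges of $H_t$ is precisely what addresses this: by restricting $E_1$ to the edges of $H_t$ incident to a common $L_{t-2}$-ancestor, the starting checkpoint $v_0$ is confined to a specific subtree, and the matching structure of $D^*$ then forces the later checkpoints $v_{2i}$ into the same subtree, so that all LCAs $w_i$ coincide at depth $t-1-m$. Making this anchoring precise, while respecting the balance condition $|E_1|\leq |E_2|$ of Lemma~\ref{lem:path-JMY}, is the technically most delicate step of the proof.
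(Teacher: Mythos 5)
Your proposal tracks the paper's proof up to the point where it matters most, and then the decisive step is missing. The reduction is essentially the paper's: pigeonhole over the parts to find a dense subfamily of $D$ meeting $L_{t-1}$ and $A_{s'}\cap V(H_t)$, project injectively to a bipartite graph, extract a dense (in the paper: connected, minimum degree $8kr$) subgraph, apply Lemma~\ref{lem:path-JMY} to get a strongly rainbow path, and close its initial segments into cycles through the tree. (Your extra invocation of Lemma~\ref{lem:transversal-subgraph} is harmless but unnecessary here; it is needed only in the companion Lemma~\ref{lem:tree+dense2}, since in the present lemma every edge of $D$ already meets $L_{t-1}$, so one simply projects onto $L_{t-1}\cup(A_{s'}\cap V(H_t))$.) The genuine gap is the mechanism forcing the $k$ cycle lengths to be $2m+2,\dots,2m+2k$ for a \emph{single} $m$, i.e.\ forcing all the tree completions to have the same length. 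You flag this yourself as "the technically most delicate step" and do not carry it out, and the anchoring you sketch cannot work: Lemma~\ref{lem:path-JMY} constrains only the \emph{first} edge of the path to lie in $E_1$, so taking $E_1$ to be (projections of) tree edges of $H_t$ pins down at most the starting checkpoint $v_0$ and says nothing about $v_2,\dots,v_{2k}$. Worse, even if you could confine all checkpoints to a common subtree below one child of some ancestor, their pairwise lowest common ancestors would then typically lie \emph{deeper} than that ancestor and at varying depths, which is the opposite of what you need.

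The paper's resolution is the Bondy--Simonovits two-labelling, and it is worth internalizing because it inverts your logic: one wants the pairs $(v_0,v_{2i})$ to be separated as high up the tree as possible, and uniformly so. Let $S=V(B')\cap L_{t-1}$, let $x'$ be the closest common ancestor of \emph{all} of $S$ (say $x'\in L_j$), fix one child $x_1$ of $x'$, and label each $v\in S$ with $1$ or $2$ according to whether the tree path $P_{x',v}$ passes through $x_1$; then partition $E(B')$ into $E_1,E_2$ by the label of the $L_{t-1}$-endpoint of each edge. The point (Claim~\ref{Quv}) is that for any $u,v\in S$ with \emph{different} labels, $P_{x',u}\cup P_{x',v}$ is a genuine path of length exactly $2(t-1-j)$ meeting $S$ only in $u,v$, because the two tree paths can only meet at $x'$. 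Lemma~\ref{lem:path-JMY} then produces a rainbow path $a_1b_1a_2\cdots a_kb_ka_{k+1}$ whose first checkpoint $a_1$ carries one label and whose remaining checkpoints $a_2,\dots,a_{k+1}$ all carry the other, so every pair $(a_1,a_i)$ closes through the tree with the \emph{same} completion length $2(t-1-j)$, yielding cycles of lengths $2(t-1-j)+2,\dots,2(t-1-j)+2k$ with $m=t-1-j\le t-1$. Without this (or an equivalent device), your argument only produces $k$ cycles of even lengths with no control on consecutiveness.
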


\begin{proof} By definition of MERT, without loss of generality we may suppose $L_{t-1}\subseteq A_1$.
By definition,  each edge of $D$ contains a vertex in $L_{t-1}$ and at least one vertex in
$V(H_t)\setminus L_{t-1}$.  
Since $A_2\cap V(H_t), A_3\cap V(H_t),\dots, A_r\cap V(H_t)$ partition $V(H_t)\setminus L_{t-1}$,
by the pigeonhole principle, for some $i\in \{2,\dots, r\}$,
at least $e(D)/(r-1)$ of the edges of $D$ contain a vertex from  $A_i\cap V(H_t)$.  Without loss of generality, suppose $i=2$.

Let $X=L_{t-1}$ and $Y=A_2\cap V(H_t)$. By definition of MERT, $|V(H_t)\cap A_2|=|L_t|$, so $|Y|=|L_t|$.
Let $D'$ be the subgraph of $D$ consisting of the edges that contain a vertex in $X$ and a vertex in $Y$. By the previous discussion,\begin{equation} \label{D'-lower}
e(D')\geq e(D)/(r-1).
\end{equation}
Let $B$ be the $(X, Y)$-projection of $D'$. Since $G$ is linear, $e(B)=e(D')$.
Also, $|V(B)|\leq |X|+|Y|= |L_{t-1}| + |L_t|$.  By our assumption about $e(G)$ and  \eqref{D'-lower},
\[e(B)\geq 8kr(|L_{t-1}+|L_t|)\geq 8kr |V(B)|.\]
So $B$ has average degree at least $16kr$. By a well-known fact, $B$ contains a connected subgraph $B'$
with minimum degree at least $8kr$.

Let $S=V(B')\cap X$. Suppose $x'$ is the closest common ancestor of $S$ in $T$. The union of the paths of $T$ joining vertices of $S$ to $x'$
forms a subtree $T_S$ of $T$ rooted at $x'$. Suppose that $x'\in L_j$. Then $V(T')\subseteq L_j\cup \dots \cup L_{t-1}$, $x'$ is the only vertex in $V(T')\cap L_j$.  For each $v\in S$, let $P_{v,x'}$ denote the unique $(v,x')$-path in $T'$.


Since $x'$ is the closest common ancestor of $S$ in $T$,
$x'$ has at least two children in $T'$. Let $x_1$ be one of the children of $x$ in $T'$.  We define a vertex labelling $f$ on $S$ as follows.
For each $v\in S$,  if $P_{x',v}$ contains $x_1$ then let $f(v)=1$, and otherwise let $f(v)=2$. Note that since $x$ had at least two children, there will be some $u, v\in S$ with $f(u)=1$ and $f(v)=2$. The following claim is one of the key ingredients used by Bondy and Simonovits in proving their results in~\cite{BS74}. For completeness,
we include a proof. 

\begin{claim} \label{Quv}
Let $u,v\in S$. If $f(u)=1$ and $f(v)=2$ then $P_{x',u}\cup P_{x',v}$ is a path of length $2(t-1-j)$ in $T'$
that intersects $S$ only in $u$ and $v$. \qed
\end{claim}

\begin{proof} It is clear that $V(P_{u,x'})\cap S=\{u\}$ and $V(P_{v,x'})\cap S=\{v\}$. To see that $P_{u,x'}$ and $P_{v,x'}$ only intersect at $x'$, suppose otherwise. Recall that since $f(u)=1$, the path $P_{u,x'}$ contains $x_1$ and $P_{v,x'}$ does not. Let $y$ be the first vertex on $P_{u,x'}\cap P_{v,x'}$ along the path $P_{u,x'}$. By our assumption $y\neq x'$ ($y$ could be $x_1$). Let $P_1$ be the subpath of $P_{v,x'}$ that goes from $v$ to $y$, and let $P_2$ be the subpath of $P_{u,x'}$ from $y$ to $x'$. It is easy to see that that $P=P_1\cup P_2$ is an $(v,x')$-path in $T'$ and furthermore, $P$ does not go through $x_1$
and hence must equal to $P_{v,x'}$. But $P_2$ and hence $P$ goes through $x_1$ since $f(u)=1$, which contradicts to $f(v)=1$.
\end{proof}


Now,  we define a partition of $E(B')$ into $E_1$ and $E_2$ as follows.  
Let $ab$ be any edge in $E(B')$ where $a\in X$ and $b\in Y$. For $i=1,2$, we put $ab$ in $E_i$ if $f(a)=i$. We  define an edge-coloring $\varphi$ on $B'$ using $(r-2)$-sets by letting $\varphi(ab)$ be the unique $(r-2)$-set such that $ab\cup \varphi(ab)\in E(D')$ for all $ab\in E(B')$. Since $G$ is linear, $\varphi$ is strongly proper. By Lemma \ref{lem:path-JMY}, with $\ell=2k$, $B'$ contains a strongly rainbow path $P$ of length $2k$ such that the
first edge of $P$ is in $E_1$ and all the other edges of $P$ are in $E_2$.
Suppose $P=a_1b_1a_2b_2\dots a_kb_ka_{k+1}$. Note that we must have $a_1\in S$. Otherwise
if $b_1\in S$ instead then the first two edges of $P$ would have the same colour, contradicting
our definition of $P$. Hence, $a_1,a_2,\dots, a_{k+1}\in S$ and $b_1,b_2,\dots, b_k\in A_2$.
Also, by our assumption about $P$, $f(a_1)=1$ and $f(a_2)=\cdots=f(a_{k+1})=2$. For each $i\in [k]$, let $P_i$ be the subpath $P$ from $a_1$ to $a_i$ and let $Q_{a_i}$ denote the unique path in $T'$ from $x'$ to $a_i$.
Let $\chi$ be the colouring in $(H,T,\chi)$ produced by Algorithm \ref{maximal-tree}.

\begin{claim} \label{even-pasting}
 For each $i\geq 2$, let $R_{i}$ be the union of the $r$-uniform paths $P_{i}^{\varphi}$, $Q_{a_1}^{\chi}$ and $Q_{a_{i}}^{\chi}$. Then $R_{i}$ is a linear path of length $2(t-1-j)+2(i-1)$ in $G$.
\end{claim}
\begin{proof}
Since $f(a_1)=1$ and $f(a_{i})=2$,  by Claim \ref{Quv}, $Q_{a_1} \cup Q_{a_{i}}$ is a path
of length $2(t-1-j)$ in
$T'$ that intersects $S$ only in $a_1$ and $a_{i}$. On the other hand $P_i$
is path of length $2(i-1)$ in $B'$. So it intersects $Q_{a_1}\cup Q_{a_{i}}$ only at $a_1$ and $a_{i}$.
So $P_i\cup Q_{a_1}\cup Q_{a_{i}}$ is a cycle of length $2(t-1-j)+2(i-1)$  in $T'\cup B'$.
By our assumptions, $\varphi$ is strongly rainbow on $P_i$  and $\chi$ is strongly rainbow on
$Q_{a_1}\cup Q_{a_i}$.
Furthermore, for any $e\in P_i$ and $f\in Q_{a_1}\cup Q_{a_{i}}$, $\varphi(e)\in V(H_t)$ while $\chi(f)\in \bigcup_{j<i} V(H_j)\setminus L_{t-1}$. So $\varphi(e)\cap \chi(f)=\emptyset$. Therefore, $R$ is a linear cycle of length $2(t-1-j)+2(i-1)$ in $G$.
\end{proof}

By Claim \ref{even-pasting}, the lemma holds for $m=t-1-j$.
\end{proof}

In the next lemma, we in fact obtain linear cycles of consecutive lengths, instead of just consecutive even lengths.

\begin{lemma} \label{lem:tree+dense2}
Let $r\geq 3,t\geq 1$. Let $G$ be an $r$-partite $r$-graph  with an $r$-partition $(A_1, A_2,  \dots, A_r)$. Let $x$ be a vertex
in $G$. Let $(H,T)$ be an MERT rooted at $x$. 
Let $t\geq 1$. 
  Let
\[F=\{e\in E(H): e\cap \bigcup_{i<t} V(H_i) =\emptyset \mbox{ and }  |e\cap V(H_t)|\geq 2|\}.\]
If $e(F)\geq 8k r^{r+2} |L_t|$ then $G$ contains linear cycles of lengths $2m+1, 2m+2,\dots, 2m+2k$, respectively,
for some $m\leq t$.
\end{lemma}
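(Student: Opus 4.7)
The plan is to mimic the strategy of Lemma~\ref{lem:tree+dense1}, adapted to extract cycles of both parities. First I will clean $F$ via Lemma~\ref{lem:transversal-subgraph} applied with $M = M_{t-1}$, obtaining $F_1 \subseteq F$ with $e(F_1) \geq [1/(r-1)]^{r-1}e(F)$ and such that each tuple $I \in M_{t-1}$ meets $V(F_1)$ in at most one vertex. Since linearity of $G$ already forces $|e \cap I| \leq 1$ for every $e \in F$ and $I \in M_{t-1}$ (otherwise $e$ would share two vertices with $I \cup v_I \in E(H)$), the cleaning ensures that, for every surviving $v \in V(F_1) \cap W_t$ (where $W_t := V(H_t) \setminus \bigcup_{i<t} V(H_i)$), the only vertex of the tuple $I_v$ remaining in $V(F_1)$ is $v$ itself. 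This strengthened property is crucial for eventually guaranteeing linearity of the cycles at the two ``top'' tree edges between $L_{t-1}$ and $W_t$.

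Because each $F_1$-edge has at least two vertices in $W_t$ lying in distinct parts $A_j$ with $j \neq s(t-1)$, a pigeonhole on the $\binom{r-1}{2}$ unordered pairs of such parts produces $(A_\alpha, A_\beta)$ and $F_2 \subseteq F_1$ such that each edge of $F_2$ has one vertex in $X := W_t \cap A_\alpha$ and one in $Y := W_t \cap A_\beta$. I then project $F_2$ to a bipartite graph $B$ on $X \cup Y$ with strongly proper edge-colouring $\varphi(\{u,v\}) := e \setminus (A_\alpha \cup A_\beta)$; by linearity of $G$, $e(B) = e(F_2)$. The hypothesis $e(F) \geq 8kr^{r+2}|L_t|$ absorbs the cleaning and pigeonhole losses, so $B$ is dense enough to pass to a connected subgraph $B'$ with $\delta(B') \geq 8rk$.

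For the tree backbone, extend $T$ to $T'$ by attaching each $u \in W_t$ as a child of its MERT parent $p_u \in L_{t-1}$, so every vertex of $W_t$ sits at depth $t$ in $T'$. Let $x'$ be the lowest common ancestor in $T'$ of $V(B') \cap X$, at some depth $j \leq t-1$, and choose a child $x_1$ of $x'$ such that $T_{x_1} \cap V(B')$ is nonempty but a minority. After a mild restriction ensuring $Y$-vertices also lie in $T_{x'}$, define $f(v) = 1$ if $v \in T_{x_1}$ and $f(v) = 2$ otherwise, and set $E_1 := \{uv \in E(B') : \min(f(u), f(v)) = 1\}$ and $E_2 := \{uv \in E(B') : f(u) = f(v) = 2\}$. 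The choice of $x_1$ guarantees $|E_1| \leq |E_2|$, and Lemma~\ref{lem:path-JMY} with $\ell = 2k$ produces a strongly rainbow path $P = v_0 v_1 \dots v_{2k}$ whose first edge is in $E_1$ and whose remaining edges are in $E_2$. Hence $f(v_0) = 1$ and $f(v_i) = 2$ for all $i \geq 1$, which forces $\mathrm{LCA}_{T'}(v_0, v_i) = x'$ uniformly in $i$.

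For each $i = 1, \dots, 2k$, the closed walk $R_i := P[v_0, v_i] \cup Q_{v_0} \cup Q_{v_i}$, where $Q_u$ denotes the $T'$-path from $x$ to $u$, realises a linear $r$-uniform cycle in $G$ of length $i + 2(t-j)$: the $B$-subpath contributes $i$ edges and the tree path contributes $2(t-j)$ distinct edges, none collapsing since $v_0, v_i$ lie in different $M_{t-1}$-tuples. Linearity follows from the strongly rainbow properties of $\varphi$ and $\chi$ on their respective parts, the disjointness of upper-tree colours (lying in $V(H_{t-1})$) from $F_2$-edges (which avoid $V(H_{t-1})$), and the cleaning of Step~1, which guarantees that the two top tree edges $I_{v_0} \cup p_{v_0}$ and $I_{v_i} \cup p_{v_i}$ share at most $\{v_0\}$ and $\{v_i\}$, respectively, with any middle $B$-path edge. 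Setting $m := t - j \leq t$ then yields cycles of the consecutive lengths $2m+1, 2m+2, \dots, 2m+2k$ as claimed. The main obstacle I anticipate is the simultaneous balancing of constants across cleaning, pigeonhole, and the degree dilution when passing to $B'$ and enforcing $|E_1| \leq |E_2|$, together with the careful verification of cycle linearity at the two top tree edges, which is precisely the role the cleaning in Step~1 is designed to play.
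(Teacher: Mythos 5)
Your overall architecture follows the paper's proof closely: clean $F$ with Lemma~\ref{lem:transversal-subgraph} against the matching of $(r-1)$-tuples at level $t$, pigeonhole onto a pair of parts, project to a bipartite graph, pass to a connected subgraph of minimum degree $8kr$, and close cycles through the subtree below the lowest common ancestor $x'$ using two $H_t$-edges plus two tree paths of length $t-1-j$. The linearity verification at the two top tree edges is also handled as in the paper. However, there is a genuine gap at the single step on which the whole parity argument rests: the claim that a child $x_1$ of $x'$ can be chosen so that $|E_1|\leq |E_2|$, where $E_1$ is the set of $B'$-edges with at least one endpoint labelled $1$ and $E_2$ is the set with both endpoints labelled $2$. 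No choice of $x_1$ guarantees this. For instance, $x'$ may have exactly two children $c_1,c_2$ with $V(B')\cap X\subseteq T_{c_1}$ and $V(B')\cap Y\subseteq T_{c_2}$; then for either choice of $x_1$ every edge of $B'$ meets $T_{x_1}$, so $E_1=E(B')$ and $E_2=\emptyset$, and Lemma~\ref{lem:path-JMY} cannot be applied (nor can the roles be swapped, since a path whose later edges merely ``meet $T_{x_1}$'' gives no control over the labels of $v_0$ and $v_i$, hence no control over whether the two tree paths meet only at $x'$).

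This is exactly the situation the paper's proof is built to handle. There the partition is into $M=\{ab: f(v_a)=f(v_b)\}$ (same label) and $N=\{ab: f(v_a)\neq f(v_b)\}$ (different labels), and two cases are run according to which class is larger. When $|M|\geq|N|$ one gets a path with one label-switching edge followed by label-preserving edges, which is the situation your construction reproduces. But when $|N|\geq|M|$ the path obtained from Lemma~\ref{lem:path-JMY} has alternating labels, and cycles of both parities must be extracted differently: odd lengths come from odd subpaths anchored at $a$, while even lengths come from even subpaths anchored at the extra initial vertex $a'$ (whose label agrees with that of $a$ because the first edge lies in $M$). Your proposal contains no analogue of this second case, so as written it only proves the lemma under an unverifiable majority assumption. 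To repair it you would need to replace your $E_1/E_2$ split by the same-label/different-label split and add the alternating-path case.
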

\begin{proof} By our assumption $L_{t-1}$ is contained in one partite set of $G$. Without loss
of generality suppose that $L_{t-1}\subseteq A_1$. Then $V(H_t)\setminus L_{t-1}\subseteq A_2\cup \dots \cup A_r$.
Let $M_t=\{e\setminus L_{t-1}: e\in H_t\}$. Since $H$ is an $r$-expansion of $T$, it is easy to see that
$M_t$ is an $(r-1)$-uniform matching contained in $A_2\cup \dots \cup A_r$.
By Lemma \ref{lem:transversal-subgraph}, there exists a subgraph $F'$ of $F$ such that
\begin{enumerate}
\item  $e(F')\geq (1/(r-1))^{r-1} e(F)$,
\item \label{at-most-one} each edge of $M_t$ intersects $V(F')$ in at most one vertex.  \label{transversal-property}
\end{enumerate}
Since $V(F')$ is disjoint from $L_{t-1}$, item 2 above ensures that
\begin{equation}\label{eq:at-most-one}
\forall e\in H_t, \,  |e\cap V(F')|\leq 1.
\end{equation} 

Let $e$ be any edge of $F'$.
By the definition of $F$ and the fact that $F'\subseteq F$, $e$ contains at least two vertices of $V(H_t)=V(M_t)$. Also, since $(A_1,\dots, A_r)$ is an $r$-partition of $G$ and $V(M_t)\subseteq A_2\cup\dots \cup A_r$,  there
exists a pair $(i,j)$ in $\{2,\dots, r\}$ such that $|e\cap V(M_t)\cap A_i|=|e\cap V(M_t)\cap A_j|=1$.
By the pigeonhole principle, for some $i,j\in \{2,\ldots,r\}$ 
the subgraph $F''$ of $F'$ with edge set
$\{e\in E(F'): |e\cap V(M_t)\cap A_i|=|e\cap V(M_t)\cap A_j|=1\}$
satisfies 
\[e(F'')\geq e(F')/\binom{r-1}{2} \geq (2/r^{r+1}) e(F).\]

By our condition on $F$, $e(F)\geq 8kr^{r+2}|L_t|$.
Hence
\begin{equation} \label{F''-lower}
e(F'')\geq 16kr|L_t|.
\end{equation}

Without loss of generality, suppose that $i=2, j=3$.
Let $B$ be the $(A_2, A_3)$-projection of $F''$.
Since $G$ is linear, $e(B)=e(F'')$. Also, note that $|V(B)|
\leq |V(M_t)\cap A_2|+|V(M_t)\cap A_3|\leq 2|L_t|$. Hence, by \eqref{F''-lower},
\[e(B)=e(F'')\geq 16kr|L_t|\geq 8kr |V(B)|.\]
So $B$ has average degree at least $16kr$. By a well-known fact, $B$ contains a connected subgraph $B^*$ such that
\[\delta(B^*)\geq 8kr.\] Let $F^*$ be the subgraph of $F''$ such that
the $(A_2,A_3)$-projection of $F^*$ is  $B^*$.  Let $S=V(F^*)\cap L_{t-1}$.
Let $x'$ be the closest common ancestors of $S$ in $T$. Let $T_S$ be the
subtree formed by the paths in $T$ from $S$ to $x'$.
Suppose that $x'\in L_j$. Then $V(T_S)\subseteq L_j\cup \dots \cup L_{t-1}$ and that $x'$ is the only vertex in $V(T_S)\cap L_j$. Furthermore, the minimality of $T_S$ implies
that $x'$ has at least two children in $T_S$. For each $v\in S$, let $P_{x',v}$ denote the unique $x',v-$path in $T_S$.

Now we define a labelling $f$ of vertices in $S$ as follows.
Let $x_1$ be one child of $x$ in $T'$.
For each $v\in S$,  if $P_{x',v}$ contains $x_1$ then let $f(v)=1$;  otherwise let $f(v)=2$.
As in the proof of Lemma \ref{lem:tree+dense1},
the definitions of $T_S$ and $f$ ensure the following.

\begin{claim} \label{Quv2}
Let $u,v\in S$. If $f(u)=1$ and $f(v)=2$, then $P_{x',u}\cup P_{x',v}$ is a path of length $2(t-1-j)$ in $T_S$
that intersects $S$ only in $u$ and $v$. \qed
\end{claim}

For each vertex $y\in V(B^*)$, there is a unique edge $e_y$ of $H_t$ that contains $y$.
Let $v_y$ be the unique vertex in $e_y\cap L_{t-1}$.
We now partition $E(B^*)$ into $M$ and $N$ as follows. Let
\[M=\{ab\in E(B^*): f(v_a) =  f(v_b)\} \quad \mbox { and } \quad N=\{ab\in E(B^*): f(v_a)\neq f(v_b)\}.\]
Let us define an edge-colouring $\varphi$ of $B^*$ using $(r-2)$-sets as follows.
For each $ab\in E(B^*)$, let $\varphi(ab)$ be the unique $(r-2)$-set such that $ab\cup \varphi(ab) \in E(F'')\subseteq E(G)$ for all $ab\in E(B^*)$. Since $G$ is $r$-partite, $\varphi(B^*)$ is disjoint from $V(B^*)$. Since $G$ is linear, $\varphi$ is strongly proper. There are two cases to consider.

\medskip

{\bf Case 1.} $|M|\geq |N|$.

\medskip

Applying Lemma \ref{lem:path-JMY} with $E_1=N, E_2=M$, $\ell=2k$, there exists a strongly rainbow path (under $\varphi$) $P=ab_1b_2\dots b_{2k}$ of of length $2k$
in $B^*$ such that the first edge is in $N$ and all the other edges are in $M$. Let us assume that $f(v_a)=1$; the case $f(v_a)=2$ can be argued similarly. Since $ab_1\in N$, we have $f(v_{b_1})=2$. Since $b_ib_{i+1}\in M$ for $i=1,\dots, 2k-1$, we have $f(v_{b_1})=\cdots= f(v_{b_{2k}})= 2$.
Let $Q_{v_a}$ denote the unique path in $T_S$ from $x'$ to $v_a$. For each 
$i\in [2k]$ let $P_i$ denote the portion of $P$ between $a$ and $b_i$ and let $Q_{v_{b_i}}$ denote 
the unique path in $T_S$ from $x'$ to $v_{b_i}$.
Since $f(v_a)=1, f(v_{b_i})=2$, by Claim \ref{Quv2} $Q_{v_a}\cup Q_{v_{b_i}}$ is a path of length $2(t-1-j)$ in $T_S$.

\begin{claim}
For each $i\geq 1$ let $R_i$ be the union of
the $r$-uniform paths $P_i^\varphi$, $Q_{v_a}^\chi$, $Q_{v_{b_i}}^\chi$ and $\{e_a, e_{b_i}\}$. Then $R_i$ is a linear cycle of length $2(t-j)+i$ in $G$.
\end{claim}
\begin{proof}
Since $\varphi$ is strongly rainbow on $P_i$, $P_i^\varphi$ is a linear path of length $i$ in $F^*$. Since $\chi$ is strongly rainbow on $T_S\subseteq T$, $Q_{v_a}\cup Q_{v_{b_i}}$ is  a linear path of length $2(t-1-j)$ in $\bigcup_{j<t} H_j$.  In particular, $V(R_1)\cap V(R_2)=\emptyset$.

By \eqref{eq:at-most-one}, $e_a$ intersects $P_i^\varphi$ only at $a$ and $e_{b_i}$ intersects $P_i^\varphi$
only at $b_i$. Since $e_a, e_{b_i}\in E(H_t)$, $e_a$ intersects $Q_{v_a}^\chi\cup Q_{v_{b_i}}^\chi$ only at $v_a$
and $e_{b_i}$ intersects $Q_{v_a}^\chi\cup Q_{v_{b_i}}^\chi$ only at $v_{b_i}$. Also, $e_a$ and $e_{b_i}$ are disjoint since $e_a\setminus \{v_a\}, e_{b_i}\setminus \{v_{b_i}\}$ are two different edges of $M_t$ and $v_a\neq v_{b_i}$. Hence, $R_i:=P_i^\varphi\cup Q_{v_a}^\chi\cup Q_{v_{b_i}}^\chi \cup \{e_a, e_{b_i}\}$ is a linear cycle of length $2(t-j)+i$ in $G$.
\end{proof}

\medskip

{\bf Case 2.} $|N|\geq |M|$.

\medskip

In this case, we apply Lemma \ref{lem:path-JMY} with $E_1=M$, $E_2=N$, $\ell=2k$.
There exists a strongly rainbow path $a'ab_1b_2\dots b_{2k-1}$ of length $2k$ in $B'$ such that
the first edge is in $M$ and all the other edges are in $N$. Without loss of generality, suppose
$f(v_{a'})=f(v_a)=1$, then we since $b_ib_{i+1}\in N$ for each $i=1,\dots, 2k-2$ we have $f(v_{b_1})=f(v_{b_3})=\cdots=f(v_{b_{2k-1}})=2$. By the same reasoning as  in Case 1,
for each $i\in [k]$, we can use the strongly rainbow path $ab_1\dots b_{2i-1}$, which has length $2i-1$ to find a linear cycle of length $2(t-j)+(2i-1)$ in $G$. These give us
linear cycles in $G$ of lengths $2m+1,2m+3,\dots, 2m+2k-1$.
Next, for each $i\in [k]$, we can use the strongly rainbow path $a'ab_1\dots b_{2i-1}$ to build a linear cycle of length $2(t-j)+2i$ in $G$. These give us linear cycles in $G$ of
length $2m+2,\dots, 2m+2k$. Together, these two collections
give us linear cycles of length $2m+1, 2m+2,\dots, 2m+2k$,
where $m=t-j\leq t$. So, in this case, the claim also holds.
\end{proof}


\subsection{Linear cycles of even consecutive lengths in linear $r$-graphs}

Now we develop our main result for the section. Our result is that for each $r\geq 3$ there are constants $c_1,c_2$, depending only on $r$ such that in every $n$-vertex linear $r$-graph $G$
with average degree $d(G) \geq c_1 k$ we can find
linear cycles of lengths $2\ell+2, 2\ell+4,\dots, 2\ell+2k$ for some $\ell\leq \lceil \frac{\log n}{\log (d(G)/k)- c_2}\rceil-1 $. This would also
immediately yield an improved bound on the linear Tur\'an number of an $r$-uniform linear $2k$-cycle.

\begin{definition}
{\rm Given a positive real $d$, an $r$-graph $G$ is  said to be {\it $d$-minimal}, if $d(G)\geq d$
but for every proper induced subgraph $H$ we have $d(H)<d(H)$.
}
\end{definition}

\begin{lemma} \label{lem:minimal}
Let $d$ be any positive real. If $G$ is an $r$-graph satisfying that $d(G)\geq d$ then
$G$ contains a $d$-minimal subgraph $G'$.
\end{lemma}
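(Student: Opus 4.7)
The plan is a single extremal-argument observation. I would define $\mathcal{F}$ to be the collection of all induced subgraphs $H$ of $G$ satisfying $d(H)\geq d$. Since $G\in\mathcal{F}$ by hypothesis, $\mathcal{F}\neq\emptyset$; since $V(G)$ is finite, $\mathcal{F}$ is also finite. Among all members of $\mathcal{F}$ I would then pick one, call it $G'$, with the fewest vertices (any other measure such as fewest edges would work equally well, but vertices are most natural since induced subgraphs are parametrized by vertex subsets).

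The verification is immediate. By construction $d(G')\geq d$. For any proper induced subgraph $H$ of $G'$ we have $|V(H)|<|V(G')|$, so the minimality of $G'$ within $\mathcal{F}$ forces $H\notin\mathcal{F}$, that is $d(H)<d$. This is exactly the definition of $d$-minimality, so $G'$ is the desired subgraph.

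There is essentially no obstacle here: the statement is the well-ordering principle applied to a finite nonempty family. The only point requiring care is interpreting ``proper induced subgraph'' as a subgraph on a proper subset of the vertex set; with that reading the argument is forced. Conceptually, this lemma plays a different role from Lemma~\ref{lem:folklore}: rather than passing to a high minimum-degree subgraph, one passes to a subgraph that sits on the boundary of the average-degree condition, so that any attempt to discard vertices pushes the density below $d$. Such extremal subgraphs typically yield tight local inequalities (e.g., every vertex must contribute at least $d/2$ edges on removal) that are useful in the density arguments of the subsequent sections.
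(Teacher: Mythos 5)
Your proof is correct and is essentially identical to the paper's: both take an induced subgraph with $d(H)\geq d$ that minimizes the number of vertices and observe that minimality forces every proper induced subgraph to have average degree below $d$. Your additional remarks on finiteness and on the role of $d$-minimality are accurate but not needed beyond the one-line argument the paper gives.
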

\begin{proof}
Among all induced subgraphs $H$ of $G$ satisfying $d(H)\geq d$, let $G'$ be one that minimizes $|V(G')|$. Then $G'$ is $d$-minimal.
\end{proof}

\begin{lemma} \label{lem:boundary}
Let $r\geq 3$ be an integer and $d$ a positive real. Let $G$ be a $d$-minimal $r$-graph. For any proper subset $S$ of $V(G)$, the number of edges of $G$ that contains a vertex in $S$ is at least $d|S|/r$.
\end{lemma}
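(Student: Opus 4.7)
The plan is to derive the bound directly from the definition of $d$-minimality by examining the induced subgraph obtained by removing $S$. Set $n=|V(G)|$, and assume without loss of generality that $S\neq\emptyset$ (the empty case is trivial). Consider the proper induced subgraph $H := G[V(G)\setminus S]$; this is well-defined as a proper induced subgraph because $S$ is a proper nonempty subset of $V(G)$.

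The key observation is that an edge of $G$ lies in $H$ if and only if it is disjoint from $S$, so the number of edges meeting $S$ is exactly $e(G)-e(H)$. Now I would lower bound $e(G)$ and upper bound $e(H)$ separately. From $d(G)\geq d$ we get
\[
e(G) \;=\; \frac{n\cdot d(G)}{r} \;\geq\; \frac{dn}{r}.
\]
From $d$-minimality applied to $H$ we get $d(H)<d$, equivalently
\[
e(H) \;=\; \frac{(n-|S|)\cdot d(H)}{r} \;<\; \frac{d(n-|S|)}{r}.
\]

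Subtracting gives
\[
e(G)-e(H) \;>\; \frac{dn}{r}-\frac{d(n-|S|)}{r} \;=\; \frac{d|S|}{r},
\]
which is the desired bound (in fact with a strict inequality). There is no real obstacle here: the argument is essentially a two-line consequence of the defining inequalities of $d$-minimality, the only thing to be careful about being the identification of edges meeting $S$ with $E(G)\setminus E(H)$, which uses that $H$ is taken to be the \emph{induced} subgraph on $V(G)\setminus S$.
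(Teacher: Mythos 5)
Your proof is correct and is essentially the paper's argument: both compare $e(G)$ with $e(G[V(G)\setminus S])$ using $d(G)\geq d$ and the $d$-minimality bound $d(G[V(G)\setminus S])<d$, the paper merely phrasing the computation as a proof by contradiction while you run it directly. No substantive difference.
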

\begin{proof}
Otherwise, suppose there is a proper subset $S$ of $V(G)$ such that the number of edges of $G$ that contain a vertex in $S$ is at most $d|S|/r$. Then the subgraph $G'$ of $G$ induced by $V(G)\setminus S$ satisfies
\[e(G')\geq e(G)-d|S|/r\geq d|V(G)|/r -d|S|/r=d(|V(G')|/r.\]
Hence $d(G')\geq d$, contradicting $G$ being $d$-minimal.
\end{proof}

\begin{theorem} \label{thm:even-lengths2}
Let $k,r$ be integers where $k\geq 1$ and $r\geq 3$. 
Let $c_3=128 r^{r+3}$ and $c_4=\log(64kr^{r+2})$. 
If $G$ be is an $r$-partite linear $r$-graph 
with average degree $d(G)\geq c_3 k$ then
$G$ contains linear cycles of lengths $2\ell+2, 2\ell+4,\dots, 2\ell+2k$, for some positive integer
$\ell\leq \lceil  \frac{\log n}{\log (d(G)/k)- c_4}\rceil-1$.
\end{theorem}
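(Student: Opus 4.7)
The plan is to combine the maximal expanded rooted tree from Algorithm~\ref{maximal-tree} with the edge-density lower bound of Lemma~\ref{lem:boundary} and the cycle-extraction result of Lemma~\ref{lem:tree+dense1}, following the standard BFS-based strategy for even cycle theorems. First I would invoke Lemma~\ref{lem:minimal} to pass to a $d$-minimal subgraph $G_0\subseteq G$ with $d(G_0)\ge d:=d(G)$; since the $r$-partition and linearity are inherited, I replace $G$ by $G_0$. Pick any vertex $x\in V(G_0)$ and run Algorithm~\ref{maximal-tree} to produce an MERT $(H,T,\chi)$ with segments $H_0,H_1,\dots$ and levels $L_0=\{x\},L_1,L_2,\dots$. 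Set the target depth
\[
t^*\;=\;\left\lceil\frac{\log n}{\log(d/k)-c_4}\right\rceil.
\]

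The argument then splits into two cases, according to the edge count $e(D_t)$, where for each $t\ge 1$, $D_t$ is the edge set from the hypothesis of Lemma~\ref{lem:tree+dense1}. If there exists some $t\le t^*$ with $e(D_t)\ge 8kr(r-1)(|L_{t-1}|+|L_t|)$, then Lemma~\ref{lem:tree+dense1} immediately delivers linear cycles of lengths $2m+2,2m+4,\dots,2m+2k$ for some $m\le t-1\le t^*-1$; setting $\ell=m$ finishes the proof. Otherwise I proceed under the standing assumption that $e(D_t)<8kr(r-1)(|L_{t-1}|+|L_t|)$ for every $t\le t^*$, aiming for a contradiction by showing that the MERT levels must grow so quickly that $|L_{t^*}|$ exceeds $n$.

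To establish that growth rate, I apply Lemma~\ref{lem:boundary} with $S=L_{t-1}$, a proper subset of $V(G_0)$ as long as MERT has not already terminated (in which case the problem is easier). This gives at least $d|L_{t-1}|/r$ edges of $G_0$ incident to $L_{t-1}$, each carrying exactly one vertex of $L_{t-1}$ by $r$-partiteness. I then classify these edges: (i) those whose other $r-1$ vertices all lie in $\bigcup_{j<t}V(H_j)\setminus L_{t-1}$, controlled by linearity (each $L_{t-1}$--old-vertex pair is covered by at most one edge); (ii) those in $D_t$, controlled by the standing assumption; and (iii) those that contain at least one vertex in $V(H_t)\setminus L_{t-1}$ and no vertex in $\bigcup_{j<t}V(H_j)\setminus L_{t-1}$, which are also in $D_t$, or those that avoid the new layer entirely and go to fresh territory. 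Invoking the choice of dominant partite direction $s(t)$ and the maximality of the matching $M_t$ in Algorithm~\ref{maximal-tree} converts a lower bound on surviving edges into a lower bound on $|L_t|$, yielding $|L_t|\ge (d/(64k^2r^{r+2}))|L_{t-1}|$, equivalently $|L_t|/|L_{t-1}|\ge 2^{\log(d/k)-c_4}$.

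Iterating this inequality gives $|L_{t^*}|\ge 2^{(\log(d/k)-c_4)\,t^*}\ge n$ by the choice of $t^*$, contradicting $|L_{t^*}|\le |V(G_0)|\le n$. Hence the first case must occur and the theorem follows. The hard part will be the bookkeeping step in the growth argument: I must simultaneously track how many of the $\ge d|L_{t-1}|/r$ incident edges can avoid $D_t$, avoid the older layers by linearity, and still fail to contribute a vertex to $L_t$. Pinning down the resulting multiplicative constant---extracting the factors $r$ and $r-1$ from the partite-direction choice, $\binom{r}{2}$ or $(r-1)^{r-1}$ from the matching/transversal selection (cf.\ Lemma~\ref{lem:transversal-subgraph}), and $r$ from $d$-minimality---is what fixes the constants $c_3=128r^{r+3}$ and $c_4=\log(64kr^{r+2})$ in the statement, and this is where the bulk of the technical work of the proof resides.
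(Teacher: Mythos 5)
Your high-level architecture (pass to a $d$-minimal subgraph, grow an MERT, and run a dichotomy between ``a dense layer yields the cycles via Lemma~\ref{lem:tree+dense1}'' and ``the ball expands too fast'') matches the paper. But there are two concrete gaps in the accounting step, and the first one is fatal as written.

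First, you never control the edges that meet the current segment $V(H_t)$ in \emph{two or more} vertices while avoiding all earlier segments --- the sets $F_i$ in the paper. A typical such edge contains one vertex of $L_{t-1}$ and another vertex of $V(H_{t-1})\setminus V(H_{t-2})$ (a ``sibling'' created in the same expansion round), so it is counted by Lemma~\ref{lem:boundary}, it is not a tree edge, it is not in $D_t$ (it need not touch $V(H_t)\setminus L_{t-1}$), and it contributes nothing to $|L_t|$. Linearity only bounds the number of such edges by roughly $|L_{t-1}|\cdot|V(M_{t-2})|$, which is useless since the ball can have up to $n$ vertices. The paper needs a genuinely separate cycle-extraction device for these edges, namely Lemma~\ref{lem:tree+dense2} (via the transversal cleaning of Lemma~\ref{lem:transversal-subgraph}), which produces $2k$ cycles of \emph{consecutive} lengths --- in particular $k$ consecutive even lengths --- whenever $e(F_i)\geq 8kr^{r+2}|L_i|$. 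Your proposal cites Lemma~\ref{lem:transversal-subgraph} only as a source of constants and never invokes Lemma~\ref{lem:tree+dense2}; without it the dichotomy does not close.

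Second, your choice $S=L_{t-1}$ in Lemma~\ref{lem:boundary} makes the ``backward'' edges (from $L_{t-1}$ into $\bigcup_{j<t-1}V(H_j)$) uncontrollable: your claim that they are ``controlled by linearity'' again only gives a bound of order $|L_{t-1}|\cdot|U_{t-2}|$. The paper instead takes $S=\bigcup_{j\leq i}V(H_j)$, charges every non-tree edge to the \emph{first} segment it meets, bounds each $e(G_j^1)$ and $e(F_j)$ by the two lemmas, and derives the recursion $|U_{i+1}|\geq (d/64kr^{r+2})|U_i|$ on the cumulative ball sizes $U_i$ rather than on the individual levels $|L_i|$. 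Relatedly, the possibility that the MERT terminates before depth $p$ is not ``the easier case'': the paper needs a separate argument (Claim~\ref{claim:mp}, applying $d$-minimality to $S=V(H_m)\setminus(L_{m-1}\cup L_m)$ together with the bound on $F_m$) to rule it out. These repairs are exactly where the constants $c_3$ and $c_4$ come from, so the bookkeeping you defer is not merely technical --- it changes the shape of the induction.
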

\begin{proof}
Let $d=d(G)$. By Lemma \ref{lem:minimal}, $G$ contains a $d$-minimal
subgraph $G'$.
Suppose $G'$ does not contain a collection of linear cycles of length $2\ell+2,2\ell+4,\dots, 2\ell+2k$, where $\ell\leq \lceil  \frac{\log n}{\log (d(G)/k)- c_4}\rceil-1$.
We derive a contradiction.
Let us apply Algorithm \ref{maximal-tree} to $G'$ from $x$ and
let $(H,T,\chi)$ be the triple produced. Let $m$ denote the height of $H$ and $T$.

For each $i\in [m]$,
let
\[G_i=\{e\in E(G')\setminus E(H): e\cap V(H_i)\neq \emptyset, e\cap \bigcup_{j<i} V(H_j)=\emptyset\}.\]
Let
\[G^1_i=\{e\in E(G_i): |e\cap V(H_i)|=1\}, \quad \mbox{ and } \quad F_i=\{e\in E(G_i): |e\cap V(H_i)|\geq 2\}.\]

Note that $G_m^1=\emptyset$, as otherwise 
Algorithm \ref{maximal-tree} would have produced non-empty $L_{m+1}$, instead of stopping at step $m$, $L_m$ being the last level.

For convenience, let $p=\lceil  \frac{\log n}{\log (d(G)/k)- c_4)}\rceil$. For convenience, define $L_{m+1}=\emptyset$.

\begin{claim} \label{even-lengths.1.}
For each $1\leq i \leq \min\{m, p\}-1$, 
we have $e(G_i^1)\leq 8kr^3 (|L_i|+|L_{i+1}|)$.
\end{claim}
\begin{proof}
Let $D_i$ be the set of edges of edges in $G_i^1$ that intersect $V(H_i)$ in $L_i$. By
Algorithm \ref{maximal-tree},
\[e(D_i)\geq (1/r) e(G_i^1).\] Let $e\in D_i$. By definition,
$e$ intersects $V(H_i)$ in exactly one vertex and that vertex lies in $L_i$. Furthermore, $e$
contains no vertex in $\bigcup_{j<i} V(H_j)$. If $e\setminus L_i$ is vertex disjoint from $V(H_{i+1})\setminus L_i$,
then $e$ would have been added to $H_{i+1}$ by Algorithm \ref{maximal-tree}, contradicting $e\notin E(H)$.
Hence $e$ must contain at least one vertex in $V(H_{i+1}\setminus L_i)$. If $e(D_i)\geq 8kr(r-1)(|L_i|+|L_{i+1}|)$
then by Lemma \ref{lem:tree+dense1} (with $t=i+1$) $G$ contains linear cycles of lengths $2\ell+2,2\ell+4,
\dots, 2\ell+2k$ for some $\ell \leq i\leq 
\lceil  \min\{m, \frac{\log n}{\log (d(G)/k)- c_4)}\rceil\}-1$, contradicting our assumption. Hence,
\[e(D_i)\leq 8kr(r-1)(|L_i|+|L_{i+1}|) < 8kr^2(|L_i|+|L_{i+1}|).\]
Therefore
\[e(G^1_i)\leq 8kr^3 (|L_i|+|L_{i+1}|).\]
\end{proof}

\begin{claim} \label{even-lengths.2.}
For each $1\leq i\leq \min\{m, p-1\}$ we have $e(F_i)\leq 8kr^{r+2}|L_i|$.
\end{claim}
\begin{proof}
Suppose $e(F_i)\geq 8kr^{r+2}|L_i|$. Then by Lemma \ref{lem:tree+dense2} (with $t=i$), we can find in $G$ linear cycles of
length $2\ell+2, 2\ell+4,\dots, 2\ell+2k$ for some $\ell\leq i \leq
\lceil  \frac{\log n}{\log (d(G)/k)- c_4)}\rceil-1$, contradicting our assumption.
Hence
\[e(F_i)\leq 8kr^{r+2}|L_i|.\]
\end{proof}

By Claims \ref{even-lengths.1.} and \ref{even-lengths.2.}, and noting that $E(G_m^i)=\emptyset$ we have
 
 \begin{equation} \label{Gi-upperbounds}
 \forall 1\leq  i\leq \min\{m, p-1\}\quad e(G_i)=e(G_i^1)+e(F_i)\leq 16kr^{r+2}(|L_i|+|L_{i+1}|). 
\end{equation}

\medskip

\begin{claim} \label{even-lengths.3.}
For each $1\leq i\in \min\{m-1, p-1\}$, $e(\bigcup_{j=1}^i G_i)\geq (d/2)\sum_{j=0}^i |L_j|$.
\end{claim}

\begin{proof}  
Let $S=\bigcup_{j=0}^i V(H_i)$. Since $i\leq m-1$, $S$ is a proper
subset of $V(G')$. Let $E_S$ denote the set of edges of $G'$ that
contains a vertex in $S$. By our definitions, $E_S\subseteq  \cup_{j=1}^iE(H_j)\cup \bigcup_{j=1}^i G_j$.
Since $G'$ is $d$-minimal, by Lemma \ref{lem:boundary},
\[|E_S|\geq d|S|/r = d(1+\sum_{j=1}^i(r-1)|L_j|)/r.\]
On the other hand, by the definition of $H$, $|\bigcup_{j=1}^i E(H_j)|=\sum_{j=1}^i |L_j|$.
Hence
\[e(\bigcup_{j=1}^i G_i)=|E_S|- |\bigcup_{j=1}^i E(H_j)| \geq d(1+\sum_{j=1}^i(r-1)|L_j|)/r -\sum_{j=1}^i |L_j|\geq \sum_{j=1}^i |L_j|(d(1-1/r)-1)+d/r\geq (d/2) \sum_{j=1}^i |L_j|.\]
\end{proof}

By \eqref{Gi-upperbounds}, we have
\begin{equation}\label{Gi-union-upper}
\forall 1\leq i\leq \min\{m-1, p-1\} \quad e(\bigcup_{j=1}^i G_i)\leq \sum_{j=1}^i 16kr^{r+2} (|L_j|+|L_{j+1}|).
\end{equation}

For each $i=0,\dots, m$, let $U_i=\bigcup_{j=0}^i L_i$.
By \eqref{Gi-union-upper}, $\forall 0\leq i \leq \min\{m-1,p-1\}$

$$  32kr^{r+2}|U_{i+1}| \geq \sum_{j=1}^i 16kr^{r+2} (|L_j|+|L_{j+1}|)\geq 
(d/2) \sum_{j=1}^i |L_j|\geq (d/2) |U_i|.$$

Hence,
\begin{equation} \label{U-iteration}
\forall 0\leq i\leq \min\{m-1, p-1\} \quad |U_{i+1}|\geq (d/64kr^{r+2}) |U_i|.
\end{equation}

\begin{claim} \label{claim:mp} $m\geq p$.
\end{claim}
\begin{proof}
Suppose otherwise. Let $S=V(H_m)\setminus (L_{m-1}\cup L_m)$.
Then $S$ is a proper subset of $V(G')$ with  $|S|=(r-2)|L_m|$.
Let $E_S$ denote the set of edges of $G'$ that contain a vertex in $S$.
Since $G'$ is $d$-minimal, we have
\[|E_S|\geq d|S|/r=d|L_m|(r-2)/r.\]
On the other hand,  since $L_m$ is the last level of $H$,
by the definitions, $E_S\subseteq E(H_m)\cup \bigcup_{i=1}^{m-1} E(G_i)\cup F_m$.
By \eqref{Gi-union-upper}, Claim \ref{even-lengths.2.} and the fact that
 $e(H_m)=|L_m|$,  we have
 \begin{eqnarray*} \label{ES-upper}
 |E_S|&\leq& |L_m|+ \sum_{j=1}^{m-1} 16kr^{r+2} (|L_j|+|L_{j+1}|)+ 8kr^{r+2}|L_m|\\
 &\leq& 32kr^{r+2} |U_{m-1}|+16kr^{r+2} |L_m|.
 \end{eqnarray*}
 
Combining the lower and upper bounds above on $|E_S|$, we get

\[ (r-2)d/r|L_m|\leq 32kr^{r+2} |U_{m-1}|+16kr^{r+2} |L_m|.\]

Since $d\geq c_3k=128 r^{r+3}k$. We have
$d/r\geq 128kr^{r+2}$. This inequality above implies $|L_m|<|U_{m-1}|$ and
thus $|U_m|=|U_{m-1}|+|L_m|\leq 2|U_{m-1}|$. But by \eqref{U-iteration}, we have
\[|U_m|\geq (d/64kr^{r+2})|U_{m-1}\geq 2|U_{m-1}|,\] a contradiction.
\end{proof}

By Claim \ref{claim:mp} $m\geq p$. But now we show that this would mean the expansion rate was so fast that $|U_p|>n$, a contradiction.  Recall that $|U_0|=|L_0|=1$. Thus by \eqref{U-iteration} 
\[|U_p|\geq  (d/64kr^{r+2})^p.\]

Taking logarithm both sides of the inequality and using
that $c_4=\log 64kr^{r+2}$, we get 

\begin{eqnarray*}
\log |U_p|&\geq& p [\log (d/k)-\log (64kr^{r+2})]\\
&\geq & \frac{\log n}{ \log (d/k)-c_4}   \left [\log (d/k)-\log (64kr^{r+2})\right]\\
&=& \log n,
\end{eqnarray*}
which gives $|U_p|>n$, a contradiction. This completes the proof of the theorem.

 \end{proof}

Finally we are ready to prove Theorem \ref{thm:even-lengths}. We need the following
result of Erd\H{o}s and Kleitman.
\begin{lemma}\cite{EK}\label{lem:EK}
Let $r\geq 2$. Every $r$-graph $G$ contains an
$r$-partite subgraph $G'$ with $e(G')\geq (r!/r^r) e(G)$.
\end{lemma}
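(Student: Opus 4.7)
The plan is to use a straightforward probabilistic (deletion-free) argument. I would partition $V(G)$ randomly into $r$ parts $A_1,\dots,A_r$ by placing each vertex independently and uniformly at random into one of the $r$ parts. Call an edge $e \in E(G)$ \emph{crossing} with respect to this random partition if $|e\cap A_i|=1$ for each $i\in [r]$, i.e., $e$ has exactly one vertex in each part.

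The key computation: for any fixed edge $e$, the $r$ vertices of $e$ are distinct, so their part-assignments are $r$ independent uniform draws from $[r]$, giving $r^r$ equally likely outcomes. Exactly $r!$ of these outcomes place one vertex in each part, so
\[
\Pr[e \text{ is crossing}] = \frac{r!}{r^r}.
\]
By linearity of expectation, the expected number of crossing edges is $(r!/r^r)\,e(G)$. Hence there exists a particular partition $(A_1,\dots,A_r)$ of $V(G)$ under which the number of crossing edges is at least $(r!/r^r)\,e(G)$. Taking $G'$ to be the $r$-partite subgraph on vertex set $V(G)$ whose edges are exactly the crossing edges under this partition, we obtain an $r$-partite subgraph with $e(G')\geq (r!/r^r)\,e(G)$, as desired.

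There is essentially no obstacle here; the entire argument is one application of linearity of expectation, and the only thing to notice is that because $G$ is $r$-\emph{uniform} and edges are sets of distinct vertices, the probability $r!/r^r$ is identical for every edge, which is exactly what makes the expectation bound clean. No concentration or alteration step is needed.
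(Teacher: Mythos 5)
Your proof is correct and complete: the random equipartition-free assignment of vertices to $r$ parts, the computation $\Pr[e \text{ crossing}]=r!/r^r$ for an edge of $r$ distinct vertices, and linearity of expectation together give exactly the claimed bound. The paper states this lemma as a cited result of Erd\H{o}s and Kleitman without reproducing a proof, and your argument is the standard one for that result, so there is nothing to compare beyond noting that you have supplied the expected argument.
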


{\bf Proof of Theorem \ref{thm:even-lengths}:}
Let $r\geq 3, k\geq 2$ be the given integers.
Let $c_3,c_4$ be the constants obtained in Theorem \ref{thm:even-lengths2}.  Let $c_1=c_3 r^r=128r^{2r+3}$ and $c_2=c_4+\log (r^r)=\log (64kr^{2r+2})$.
Let $G$ be an $n$-vertex $r$-graph with $d(G)\geq c_1k$. By Lemma \ref{lem:EK}, $G$ contains an $r$-partite subgraph $G'$ with $d(G')\geq d(G) (r!/r^r)\geq d(G)/r^r\geq c_3k$. By Theorem \ref{thm:even-lengths2}, $G'$ (and thus $G$ als0) contains linear cycles of lengths $2\ell+2, 2\ell+4,\dots, 2\ell+2k$, for some positive integer
\[\ell\leq \left\lceil  \frac{\log n}{\log (d(G')/k)- c_4)}\right \rceil-1\leq \left \lceil\frac{\log n}{\log (d(G)/k)-\log r^r- c_4)}\right\rceil-1 \leq 
\left \lceil\frac{\log n}{\log (d(G)/k)- c_2)}\right\rceil-1. 
\]. 
\qed
\medskip 

As mentioned in the introduction, as a quick application of Theorem \ref{thm:even-lengths}, we obtain an improvement (in Corollary \ref{cor:linear-turan}) on the
bound given in \cite{CGJ} on the linear Tur\'an number of an even cycle by reducing the 
coefficient from at least exponential in $k$ to a function quadratic in $k$ (for fixed $r$).

\medskip

{\bf Proof of Corollary \ref{cor:linear-turan}:}
Let $r\geq 3, k\geq 2$ be the given integers.
Let $c_1=128r^{2r+3}$ and $c_2=\log (64kr^{2r+2})$, as in Theorem \ref{thm:even-lengths}.
Let $c_3=64kr^{2r+3}$.
Let $G$ be an $n$-vertex $r$-graph with $e(G)\geq c_3 k n^{1+1/k}$.  Then 
$d(G)\geq c_3 r kn^{1/k} \geq c_1k$ thus we can apply  Theorem \ref{thm:even-lengths} to $G$ and obtain that
it contains linear cycles of lengths $2\ell, 2\ell+4,\dots, 2\ell+2k-2$ for some
\[\ell\leq \left\lceil  \frac{\log n}{\log (d(G)/k)- c_2)} \right \rceil \leq  \left\lceil \frac{\log n}{\log (c_3 r) +\log{n^{1/k}} - c_2}
\right\rceil  \leq k. \]
Therefore the even numbers in the interval $[2\ell,\ldots, 2\ell+2(k-2)]$ contain the number $2k$, which means $G'$ must contain a linear cycle of length exactly $2k$.
\qed


\section{Concluding remarks} \label{section:conclusion}

We do not know if we can improve the bound on the shortest lengths of the cycles guaranteed
in Theorem \ref{thm:all-lengths} to a similar one as in Theorem \ref{thm:even-lengths}. 

\begin{question} \label{question:all-lengths}
Let $r\geq 3$ and $k\geq 1$ be integers. Is it true that there exist constants $c_1=c(r), c_2=c(r,k)$
such that if $G$ is an $n$-vertex linear $r$-graph with average degree $d(G)\geq c_1k$ then
$G$ contains linear cycles of $k$ consecutive lengths, the shortest of which is at most $2\lceil \frac{\log n}{\log d(G)/k-c_2}\rceil$?
\end{question}

A weaker question is the following analogue for odd linear cycles.

\begin{question} \label{question:odd-lengths}
Let $r\geq 3$ and $k\geq 1$ be integers. Is it true that there exist constants $c_1=c(r), c_2=c(r,k)$
such that if $G$ is an $n$-vertex linear $r$-graph with average degree $d(G)\geq c_1k$ then
$G$ contains linear cycles of $k$ consecutive odd lengths, the shortest of which is at most $2\lceil \frac{\log n}{\log (d(G)/k)-c_2}\rceil$?
\end{question}

If the answer to Question \ref{question:odd-lengths} is affirmative, then it would give better bounds on the known upper bounds on $ex_L(n,C_{2k+1})\leq cn^{1+1/k}$, reducing the coefficient $c$ from being exponential in $k$ to being quadratic in $k$, just like in the Corollary~\ref{cor:linear-turan}.
As a good starting point to address Questions \ref{question:all-lengths} and \ref{question:odd-lengths} consider the case $k=1$.

We would like to mention the following result of Ergemlidze, Gy\H{o}ri and Methuku \cite{EGM}.
\begin{theorem} {\rm (Theorem 3 in \cite{EGM})} \label{thm:EGM}
Let $\cC^r_m$ denote the family of $r$-uniform linear cycles of length at most $m$.
If $\ex(n,\cC^2_{2k-2})\geq c n^\alpha$ for some $c,\alpha>0$ then
$\ex_L(n, \cC^3_{2k+1})= \Omega(n^{2-\frac{1}{\alpha}})$.
\end{theorem}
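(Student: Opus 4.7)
The plan is to combine a graph $G_0$ that witnesses the hypothesized lower bound $\ex(n_0,\cC^2_{2k-2})\ge cn_0^\alpha$ with a random proper edge-coloring, obtaining a linear $3$-graph from which short linear cycles are then removed by a deletion argument.

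Given such a $G_0$ on $n_0$ vertices with $m\ge cn_0^\alpha$ edges and no cycle of length $\le 2k-2$, I would introduce a set $W$ of $t$ fresh ``color'' vertices (with $t$ to be optimized) and take a uniformly random proper edge-coloring $\chi:E(G_0)\to W$, which exists once $t\ge\Delta(G_0)+1$. The $3$-graph
\[
H\ :=\ \bigl(V(G_0)\cup W,\ \{\{u,v,\chi(uv)\}:uv\in E(G_0)\}\bigr)
\]
is automatically linear, since properness forbids two $3$-edges from sharing both a $V(G_0)$-vertex and a color, and distinct edges of $G_0$ contribute triples with distinct $V(G_0)$-pairs.

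To control short linear cycles, I would observe that any length-$\ell$ linear cycle $e^*_1,\ldots,e^*_\ell$ in $H$ projects to a cyclic sequence $e_1,\ldots,e_\ell$ of edges of $G_0$ in which each consecutive pair either shares a vertex of $V(G_0)$ (a ``vertex transition'') or is monochromatic under $\chi$ (a ``color transition''). Because each $3$-edge of $H$ contains just one color vertex, no two consecutive transitions can both be color transitions, so at most $\lfloor\ell/2\rfloor$ transitions are of the color type; in the extreme case of zero color transitions the sequence $e_1,\ldots,e_\ell$ is itself a cycle of length $\ell$ in $G_0$, which the girth hypothesis permits only when $\ell\in\{2k-1,2k,2k+1\}$. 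For each $\ell\le 2k+1$ and each transition pattern the expected number of corresponding linear cycles splits into a ``color-using'' contribution, which is the product of a combinatorial count of appropriately shaped walks in $G_0$ with factors of $O(1/t)$ coming from each color coincidence, and a ``color-free'' contribution, whose count matches that of short cycles in $G_0$. A preliminary random sparsification of $G_0$ (retaining each edge independently with probability $p$) further reduces the color-free contribution at a proportional cost in density.

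The parameters $p$ and $t$ are then tuned to balance the total expected number of short linear cycles in $H$ against $pm$, so that a standard deletion argument (removing one $3$-edge per short linear cycle) leaves a linear $3$-graph $H'$ on $N:=n_0+t$ vertices with $\Omega(pm)$ $3$-edges and no linear cycle of length $\le 2k+1$. Tracking the arithmetic with $m=cn_0^\alpha$ and the optimal $p,t$ yields $|E(H')|=\Omega(N^{2-1/\alpha})$, which gives the stated bound $\ex_L(N,\cC^3_{2k+1})=\Omega(N^{2-1/\alpha})$. The main obstacle is the color-free case: an extremal $G_0$ is permitted (by the hypothesis) to contain many cycles of lengths $2k-1,2k,2k+1$, and each lifts via $\chi$ to a forbidden linear cycle of $H$; handling these is exactly what forces the loss from the naive exponent $\alpha$ down to $2-1/\alpha$ and is the quantitative heart of the argument.
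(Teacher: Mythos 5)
First, note that the paper does not prove this statement at all: it is quoted verbatim from \cite{EGM} (``Theorem 3 in \cite{EGM}''), and the only hint the paper gives about the underlying construction is the remark that it produces a $\cC^3_{2k+1}$-free $3$-graph on $\Theta(e(G))$ \emph{vertices} with average degree $\Omega(d(G))$. Your construction has the wrong shape to reach that target, and no tuning of $p$ and $t$ can fix it: your $H$ has at most $e(G_0)=m$ hyperedges (one per edge of $G_0$) on at least $n_0$ vertices, whereas the claimed bound requires, after setting $N=\Theta(m)$ and $m=cn_0^{\alpha}$, about $N^{2-1/\alpha}=\Theta(m^2/n_0)=\Theta(e(G_0)\,d(G_0))$ hyperedges --- polynomially \emph{more} hyperedges than $G_0$ has edges. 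Worse, your scheme would prove too much if it worked: retaining $\Omega(m)$ hyperedges with $t=O(n_0)$ gives $\Omega(N^{\alpha})$ edges, and $\alpha>2-1/\alpha$ for every $\alpha>1$. Concretely, take $k=3$ and $\alpha=3/2$ (realizable, since $\ex(n,\cC^2_4)=\Theta(n^{3/2})$): you would obtain a $\cC^3_7$-free, hence $C^3_6$-free, linear $3$-graph with $\Omega(N^{3/2})$ edges, contradicting the bound $\ex_L(n,C^3_6)=O(n^{4/3})$ of \cite{CGJ} quoted in this very paper. So the deletion step must in fact destroy almost everything.

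The concrete failure point is one you mention only in passing. A linear $(2k+1)$-cycle of $H$ with a single colour transition arises from any path $e_1\cdots e_{2k+1}$ of length $2k+1$ in $G_0$ with $\chi(e_1)=\chi(e_{2k+1})$; there are $\Theta(n_0d^{2k+1})$ such paths, so the expected number of these cycles is $\Theta(n_0d^{2k+1}/t)$, and pushing this below $m=\Theta(n_0d)$ forces $t\gg d^{2k}$. For $\alpha=1+1/(k-1)$, i.e.\ $d=n_0^{1/(k-1)}$, this means $t\gg n_0^{2k/(k-1)}\gg m$, so $N\approx t$ and the density of $H$ collapses; sparsifying by $p$ merely converts the requirement into $pd\ll d^{1/(2k)}$, which again leaves far fewer than $N^{2-1/\alpha}$ hyperedges after deletion. (The colour-free lifts of the $(2k-1)$-, $2k$- and $(2k+1)$-cycles of $G_0$, which you single out as ``the quantitative heart,'' are actually the lesser problem.) The construction has to run the other way around: the vertex set of the $3$-graph should be indexed essentially by the edges of $G_0$, so that $N=\Theta(m)$, with hyperedges built from codegree pairs (cherries) of $G_0$, of which there are $\Theta(m^2/n_0)=\Theta(N^{2-1/\alpha})$; short linear cycles of such a $3$-graph force short cycles of $G_0$ and are excluded outright by the girth hypothesis, with no deletion and no loss of edges.
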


\noindent A famous conjecture in extremal graph theory, due to Erd\H{o}s and Simonovits \cite{Erd65, ES-compact} asserts that $\ex(n,\cC^2_{2k})=\Omega(n^{1+1/k})$ for any $k\geq 2$.
This is only known to be true for $k\in \{2,3,5\}$ (see \cite{degenerate-survey} for further details).
Hence, Theorem \ref{thm:EGM} yields the following.

\begin{corollary} {\rm (\cite{EGM})} \label{cor:EGM}
For any $k\in \{2,3,4,6\}$, $\ex_L(n,\cC^3_{2k+1})= \Omega(n^{1+1/k})$.
\end{corollary}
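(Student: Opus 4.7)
The plan is to reduce the corollary to Theorem~\ref{thm:EGM} by supplying the required lower bound on $\ex(n,\cC^2_{2k-2})$ for each $k \in \{2,3,4,6\}$. Solving $2 - 1/\alpha = 1 + 1/k$ gives $\alpha = 1 + 1/(k-1) = k/(k-1)$, so it suffices to exhibit, for each such $k$, an $n$-vertex graph of girth at least $2k-1$ with $\Omega(n^{1+1/(k-1)})$ edges; equivalently, to verify $\ex(n,\cC^2_{2(k-1)}) = \Omega(n^{1+1/(k-1)})$, which is exactly the Erd\H{o}s--Simonovits even-cycle lower bound conjecture at parameter $k-1$.

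First I would dispose of $k=2$ as a trivial case: here $2k-2 = 2$, and since simple graphs contain no cycle of length at most $2$, $\cC^2_2$ is empty and $\ex(n,\cC^2_2) = \binom{n}{2} = \Theta(n^2)$. Plugging $\alpha = 2$ into Theorem~\ref{thm:EGM} yields $\ex_L(n,\cC^3_5) = \Omega(n^{2-1/2}) = \Omega(n^{3/2})$, as required.

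For the remaining cases $k \in \{3,4,6\}$, the needed bound $\ex(n,\cC^2_{2(k-1)}) = \Omega(n^{1+1/(k-1)})$ is precisely the known instance of the Erd\H{o}s--Simonovits conjecture at $k-1 \in \{2,3,5\}$. These are realized by standard algebraic constructions: the Erd\H{o}s--R\'enyi polarity graphs of projective planes give $C_4$-free graphs on $n$ vertices with $\Theta(n^{3/2})$ edges ($k=3$); incidence graphs of generalized quadrangles, having girth $8 \geq 7$, give $\Theta(n^{4/3})$ edges and hence are $\cC^2_6$-free ($k=4$); and incidence graphs of generalized hexagons, having girth $12 \geq 11$, give $\Theta(n^{6/5})$ edges and are $\cC^2_{10}$-free ($k=6$). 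In each case Theorem~\ref{thm:EGM} then delivers $\ex_L(n,\cC^3_{2k+1}) = \Omega(n^{2-1/\alpha}) = \Omega(n^{1+1/k})$.

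There is no real technical obstacle in the argument: the corollary is a direct substitution. The only subtlety worth remarking on is precisely why the statement is restricted to $k \in \{2,3,4,6\}$: other values of $k$ would require the Erd\H{o}s--Simonovits conjecture at parameters $k-1 \notin \{1,2,3,5\}$, which remains open, so the list $\{2,3,4,6\}$ is exactly the set of $k$ to which Theorem~\ref{thm:EGM} can currently be fed to obtain the exponent $1+1/k$.
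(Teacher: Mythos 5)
Your proposal is correct and follows exactly the route the paper intends: the paper derives Corollary~\ref{cor:EGM} by feeding the known cases $k-1\in\{1,2,3,5\}$ of the Erd\H{o}s--Simonovits lower bound $\ex(n,\cC^2_{2(k-1)})=\Omega(n^{1+1/(k-1)})$ into Theorem~\ref{thm:EGM} with $\alpha=1+1/(k-1)$, precisely your substitution, including the trivial $k=2$ case. One small correction to your witnesses for $k=3$: the Erd\H{o}s--R\'enyi polarity graph is $C_4$-free but contains triangles, so it is not $\cC^2_4$-free (the family consists of \emph{all} cycles of length at most $4$); you should instead use the bipartite point--line incidence graph of a projective plane (a generalized $3$-gon), which has girth $6$ and $\Theta(n^{3/2})$ edges, in line with the generalized quadrangle and hexagon constructions you use for $k=4$ and $k=6$. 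This does not affect the validity of the conclusion, since the required bound $\ex(n,\cC^2_4)=\Omega(n^{3/2})$ is indeed known.
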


\noindent Interestingly the proof of the above result of Ergemlidze, Gy\H{o}ri and Methuku \cite{EGM} in fact also works in the sparse range.
The following property can be easily derived from the construction of \cite{EGM} (see its final section):
If there exists a $\cC^2_{2k-2}$-free graph $G$ with average degree $d$, then there exists a $\cC^3_{2k+1}$-free $3$-graph on $\Theta(e(G))$ vertices and with average degree at least $\Omega(d)$.

By Corollary \ref{cor:EGM}, the bounds on the shortest lengths of the cycles
in Questions \ref{question:all-lengths} and \ref{question:odd-lengths}, if
true, are best possible when $r=3$ and $k=2,3,4,6$.
If the above-mentioned Erd\H{o}s-Simonovits conjecture on $\ex(n,\cC^2_{2k})$ is true then the bounds in these questions would be optimal for $r=3$ and for all $k$.



\section{Appendix}

{\bf Proof of Proposition \ref{prop:optimal-lengths}:}
A partial $(n,k,q)$-{\it Steiner system} is a family $\cF$ of $k$-subsets on $[n]$ such that every $q$-subset of $[n]$ is in at most one member of $\cF$. In particular, a partial $(n,k,2)$ Steiner system is a linear hypergraph.
R\"odl \cite{rodl} showed that for all fixed $k>q\geq 2$, as $n\to \infty$ there exist partial $(n,k,q)$-Steiner systems of size $(1-o(1))\binom{n}{q}/\binom{k}{q}$ (see \cite{keevash}, \cite{glock} for recent breakthroughs on the existence of steiner systems).
Let $m=\lfloor (1-\epsilon) \log_d n\rfloor$. By our discussion above, we can find  a large enough integer $n_0$ such that for all $n\geq n_0$
there exists an $n$-vertex partial $(n,r,2)$-Steiner system $G$ of size at least $0.9 \binom{n}{2}/\binom{r}{2}$ and that the following 
inequality also holds

\begin{equation} \label{n0-condition}
0.8d n^{\epsilon^2} > 2^{m+1} r.
\end{equation}
By definition, $G$ is a linear $r$-graph. Set $p=2rd/n$ and let $F$ be a random subgraph of $G$ obtained by independently including each edge of $G$ with probability $p$.
Let $\mathbb{X}$ denote the number of edges in $F$ and $\mathbb{Y}$ the number of linear cycles of length at most $m$ in $F$. Then

\[\mathbb{E}[\mathbb{X}]\geq
0.9\binom{n}{2}/\binom{r}{2} \cdot (2rd/n)>1.8dn/r.\]

On the other hand, observe that for any fixed $\ell$, there are fewer than $n^\ell$ ways to choose a cyclic list $v_1v_2\dots v_\ell v_1$.
Since $G$ is linear, for each cyclic list $v_1v_2\dots v_\ell v_1$ there is at most one linear cycle in $G$ with $v_1v_2\dots v_\ell v_1$ being its skeleton. So there are fewer than $n^\ell$ linear cycles of length $\ell$ in $G$. Hence, using $d\geq (2r)^{\frac{1}{\epsilon^2}}$ and $m\leq (1-\epsilon) \log_d n$, we have

\[\mathbb{E}[\mathbb{Y}]\leq \sum_{\ell=3}^m n^\ell (2rd/n)^\ell =\sum_{\ell=3}^m (2rd)^\ell<2(2rd)^m<2^{m+1} d^{(1+\epsilon) m}\leq 2^{m+1} n^{1-\epsilon^2}\]

Therefore,  by \eqref{n0-condition},

\[\mathbb{E}[\mathbb{X}-\mathbb{Y}] > \frac{1.8dn}{r}-2^{m+1} n^{1-\epsilon^2}>\left(\frac{1.8d}{r}-\frac{2^{m+1}}{n^{\epsilon^2}} \right)n \geq \frac{dn}{r}.\]

Hence there exists an $F$ for which $\mathbb{X}-\mathbb{Y}\geq \frac{dn}{r}$.
From $F$ let us delete one edge from each linear cycle of length at most $m$. Let $H$ be the remaining graph.
Then $H$ is an $n$-vertex linear $r$-graph that has average degree at least $d$ and has no linear cycles of length at most $(1-\epsilon)\log_d n$.
\qed
\end{document}